\documentclass[11pt]{article}
\usepackage{amsmath, amsfonts,amssymb, amsthm}
\usepackage{tikz}
\usetikzlibrary{patterns}
\usepackage{enumitem}
\usepackage{graphicx}
\usepackage[a4paper, top=1in, bottom=1.15in, left=1.15in, right=1.2in]{geometry}

\newtheorem{thm}{Theorem}[section]
\newtheorem{exa}[thm]{Example}
\newtheorem{cla}[thm]{Claim}
\newtheorem{prop}[thm]{Proposition}

\newtheorem{lem}[thm]{Lemma}

\newtheorem{fact}[thm]{Fact}

\newcommand{\floor}[1]{\left\lfloor#1\right\rfloor}
\newcommand{\ceiling}[1]{\left\lceil#1\right\rceil}
\newcommand{\dk}{\ceiling{\frac{n}{2}}+\floor{\frac{n+2}{2\ceiling{\frac{k+1}{2}}}}-\frac{n}{k}}
\newcommand{\Dk}{\begin{cases} 
\ceiling{\frac{n}{2}}+\floor{\frac{n+2}{k+1}}-\frac{n}{k} & : k \text{ odd }\\
\frac{n}{2}+\floor{\frac{n+2}{k+2}}-\frac{n}{k} & : k \text{ even }  
\end{cases}}

\newcommand{\ck}{\ceiling{\frac{k+1}{2}}}
\newcommand{\tck}{2\ceiling{\frac{k+1}{2}}}

\newcommand{\tbf}[1]{\textbf{#1}}

\begin{document}
\title{On Hamiltonian cycles in balanced $k$-partite graphs}
\author{Louis DeBiasio\thanks{Department of Mathematics; Miami University; Oxford, OH.  {\tt debiasld@miamioh.edu}, {\tt spanienm@miamioh.edu}} \thanks{Research supported in part by Simons Foundation Collaboration Grant \# 283194.} and Nicholas Spanier\footnotemark[1]}

\maketitle
\begin{abstract}
For all integers $k$ with $k\geq 2$, if $G$ is a balanced $k$-partite graph on $n\geq 3$ vertices with minimum
degree at least
\[
\ceiling{\frac{n}{2}}+\floor{\frac{n+2}{2\ceiling{\frac{k+1}{2}}}}-\frac{n}{k}=\begin{cases} 
\ceiling{\frac{n}{2}}+\floor{\frac{n+2}{k+1}}-\frac{n}{k} & : k \text{ odd }\\
\frac{n}{2}+\floor{\frac{n+2}{k+2}}-\frac{n}{k} & : k \text{ even }  
\end{cases},
\]
then $G$ has a Hamiltonian cycle unless $k=2$ and 4 divides $n$, or $k=\frac{n}{2}$ and 4 divides $n$.  In the case where $k=2$ and 4 divides $n$, or $k=\frac{n}{2}$ and 4 divides $n$, we can characterize the graphs which do not have a Hamiltonian cycle and see that $\ceiling{\frac{n}{2}}+\floor{\frac{n+2}{2\ceiling{\frac{k+1}{2}}}}-\frac{n}{k}+1$ suffices.  This result is tight for all $k\geq 2$ and $n\geq 3$ divisible by $k$. 
\end{abstract}

\section{Introduction}

The study of Hamiltonian cycles in balanced $k$-partite graphs begins with the following classic results of Dirac, and Moon and Moser.  Dirac \cite{D} proved that for all graphs $G$ on $n\geq 3$ vertices, if $\delta(G)\geq \ceiling{\frac{n}{2}}$, then $G$ has a Hamiltonian cycle. Moon and Moser \cite{MM} proved that for all balanced bipartite graphs $G$ on $n\geq 4$ vertices, if $\delta(G)\geq \frac{n+2}{4}$, then $G$ has a Hamiltonian cycle.

Over 30 years later Chen, Faudree, Gould, Jacobson, and Lesniak \cite{CFGJL} beautifully tied these results together by proving that for all $k\geq 2$, if $G$ is a balanced $k$-partite graph on $n$ vertices with 
\begin{equation}\label{cfgjl}
\delta(G)> \frac{n}{2}+\frac{n}{2\ceiling{\frac{k+1}{2}}}-\frac{n}{k},
\end{equation}
then $G$ has a Hamiltonian cycle.  It turns out that while their result is nearly optimal, in most cases the degree condition can be improved by 1.  The purpose of this note is simply to provide the precise minimum degree condition in all cases thereby filling the lacuna in the above result (and as we point out in the Appendix, it is unfortunately not as simple as replacing the strict inequality in \eqref{cfgjl} with a weak inequality).  

\begin{thm}\label{main} Let $k$ be an integer with $k\geq 2$.  For all balanced $k$-partite graphs $G$ on $n$ vertices, if
\begin{equation}\label{cf}
\delta (G) \geq \dk=\Dk,
\end{equation}
then $G$ has a Hamiltonian cycle unless $k=2$ and 4 divides $n$, or $k=\frac{n}{2}$ and 4 divides $n$.  
\end{thm}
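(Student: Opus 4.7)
The plan is to use the CFGJL theorem \eqref{cfgjl} as a black box whenever the hypothesis $\delta(G) \geq \dk$ already implies the strict inequality $\delta(G) > \dka$, and to handle the remaining boundary residues of $n$ modulo $2\ck$ by a direct structural argument. Writing $m = 2\ck$ and $r = n \bmod m$, a short calculation shows that
\[
\dk - \dka = \left(\ceiling{\tfrac{n}{2}} - \tfrac{n}{2}\right) + \left(\floor{\tfrac{n+2}{m}} - \tfrac{n}{m}\right),
\]
which depends only on the parity of $n$ and on $r$. For every residue in which $\dk > \dka$, the integrality of $\delta$ gives $\delta \geq \dk > \dka$, and CFGJL produces a Hamiltonian cycle. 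This disposes of almost all $(n,k)$.

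Among the residues where $\dk \leq \dka$, several further collapse because $\dka$ is non-integer while $\dk$ is integer, so the integrality of $\delta$ already forces $\delta > \dka$. What remains is a short finite menu of residue patterns — essentially $n$ even with $r$ in an initial segment of $\{0, 1, \ldots, m-3\}$, and $n$ odd with the analogous initial segment determined by when $\tfrac{1}{2} - r/m \leq 0$ — in which our hypothesis is strictly weaker than CFGJL's. For each such class I would follow the CFGJL skeleton: take a longest path in $G$, run the standard rotation-extension / closure argument, and count neighbours of the path endpoints in each part. At the threshold $\delta = \dk$ every inequality in the CFGJL proof must be met with equality, and this rigidity should either close the path into a Hamiltonian cycle or force $G$ to be isomorphic to one of a small number of specific extremal graphs.

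The two stated exceptions, $k=2$ with $4 \mid n$ and $k = n/2$ with $4 \mid n$, arise precisely as the residue classes where a non-Hamiltonian extremal graph really does exist at degree exactly $\dk$. For $k=2$ with $4 \mid n$ this is the Moon--Moser extremal balanced bipartite graph; for $k=n/2$ with $4 \mid n$ it is the analogous construction on parts of size two with an explicit matching-based obstruction. In each of these regimes the plan is twofold: first exhibit the obstruction to confirm that $\dk$ is genuinely insufficient, and second use the rigidity analysis from the boundary-case step to identify any non-Hamiltonian $G$ with this extremal graph, which simultaneously isolates the exceptions and proves the $\dk + 1$ bound claimed in the abstract.

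The main obstacle will be the rigidity analysis in the boundary step. Because one is squeezing out exactly one additive unit beyond CFGJL, no inequality in the longest-path / rotation-extension argument can afford any slack; each counting step has to be pushed to equality and then matched against the candidate extremal graphs. Doing this bookkeeping uniformly across the two parities of $k$ (which change $\ck$), both parities of $n$, and the surviving residues of $n$ modulo $2\ck$, is where the bulk of the work will lie.
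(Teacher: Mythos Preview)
Your reduction to CFGJL covers far fewer cases than you claim. For $k$ even one has $\dk=\lceil\dka\rceil$ only when $n\equiv k\pmod{k+2}$, and $\dk=\lfloor\dka\rfloor$ in every other residue; the picture for $k$ odd is similar. So the black-box step handles on the order of one residue class per modulus, not ``almost all'' $(n,k)$. Your second filter is also wrong: if $\dk\le\dka$ with $\dka$ non-integral then necessarily $\dk=\lfloor\dka\rfloor<\dka$, so $\delta\ge\dk$ permits $\delta<\dka$ and integrality of $\delta$ buys nothing. Consequently the ``rigidity analysis'' you defer is essentially the entire problem, and the proposal offers no argument for it beyond the heuristic that at $\delta=\dk$ every inequality in the CFGJL rotation--extension proof should collapse to equality. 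That is not how a one-below-threshold argument works: the slack is distributed across many steps, none individually forced tight, and for $k=n/2$ with $4\mid n$ there are in fact three structurally distinct non-Hamiltonian families at $\delta=\dk$ (one with a cut-vertex, one sporadic on $8$ vertices, one a genuine bipartite obstruction), not the single matching-type example you describe.

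The paper does not invoke CFGJL at all. After showing (modulo the exceptional families) that $G$ is $2$-connected, it proves that every longest cycle $C$ is \emph{strongly dominating}: $V(G)\setminus V(C)$ is independent and no two neighbours of off-cycle vertices are consecutive on $C$. Fixing $z\notin V(C)$, it forms the successor and predecessor independent sets $S$ and $R$ of $N(z)$ along $C$ and counts how many parts $V_i$ each meets; lower and upper bounds on this count collide to give an immediate contradiction when $k$ is odd, and when $k$ is even pin $S$ into exactly $k/2$ parts. At that point $V(G)$ splits as $A\cup B$ with $S\subseteq A$, and the proof finishes by verifying Chv\'atal's bipartite degree-sequence criterion for $H=G[A,B]$. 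The dominating-cycle lemma and Chv\'atal's bipartite theorem are the structural tools your plan is missing; a longest-path rotation argument alone will not deliver the sharp constant.
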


Since a graph on $n$ vertices can be viewed as a $k$-partite graph with $k=n$, note that when $k=n$, we have $\dk=\ceiling{\frac{n}{2}}$ and thus Theorem \ref{main} reduces to Dirac's theorem.  When $k=2$, we have $\dk=\floor{\frac{n+2}{4}}$ and thus when 4 does not divide $n$, Theorem \ref{main} reduces to Moon and Moser's theorem; and when 4 does divide $n$, Ferrara, Jacobson, and Powell \cite{FJP} characterized all balanced bipartite graphs $G$ on $n\geq 4$ vertices such that $\delta(G)\geq \frac{n}{4}$, yet $G$ does not have a Hamiltonian cycle.  So our proof will only handle the cases when $3\leq k\leq \frac{n}{2}$.

We will also prove the following which will handle the case when $k=\frac{n}{2}$ and 4 divides $n$.  Together with the results in \cite{FJP}, this gives a complete characterization of balanced $k$-partite graphs $G$ on $n$ vertices which satisfy $\delta (G) \geq \dk$, but do not have a Hamiltonian cycle.

\begin{prop}\label{prop:n=2k}
Let $n\geq 8$ be divisible by $4$, let $k=\frac{n}{2}$, and let $G$ be a balanced $k$-partite graph on $n$ vertices.  If $\delta(G)\geq \frac{n}{2}+\floor{\frac{n+2}{k+2}}-\frac{n}{k}=\frac{n}{2}-1$ and $G$ does not have a Hamiltonian cycle, then $G$ belongs to one of the families of examples described in Example \ref{n=2k}.
\end{prop}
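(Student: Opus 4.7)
The plan is to leverage that $\delta(G) \geq n/2 - 1$ is just one short of Dirac's Hamiltonicity threshold, and combine a rotation/extension argument with the $k$-partite constraint (since $k = n/2$, each vertex has exactly one forbidden partner, so the non-edges of $G$ contain a perfect matching).

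Since $\delta(G) \geq (n-1)/2$, a standard Dirac-type argument yields a Hamilton path $P = v_1 v_2 \cdots v_n$. If $G$ has no Hamilton cycle, then for every $i \in \{2, \ldots, n\}$ the edges $v_1 v_i$ and $v_{i-1} v_n$ cannot both be present, else the rotated path closes into a Hamilton cycle. This gives $d(v_1) + d(v_n) \leq n - 1$, and combined with the degree hypothesis, $n - 2 \leq d(v_1) + d(v_n) \leq n - 1$. A P\'osa-style rotation argument, applied also to endpoint pairs obtained by rotating $P$, then forces $V(G)$ to split as $V(G) = A \sqcup B$ with $|A| = |B| = n/2$ and a very restricted set of edges between $A$ and $B$.

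Next, I would overlay the partition structure. In the extreme case of no edges between $A$ and $B$, each vertex of $A$ must have $\geq n/2 - 1$ neighbors in $A$, which together with the partition constraint forces $G[A]$ to be complete and each partition pair $\{a_i, b_i\}$ to be split between $A$ and $B$; this gives the canonical disconnected family $G \cong K_{n/2} \sqcup K_{n/2}$. In the intermediate cases where a few edges cross between $A$ and $B$, I would show that either a re-routing of the Hamilton path through these edges produces a Hamilton cycle (contradicting our assumption), or the additional edges are so rigidly constrained that $G$ fits into one of the other explicit families of Example~\ref{n=2k}. The divisibility hypothesis $4 \mid n$ (equivalently, $k$ even) enters as a parity constraint on the alternating matching structure required to close a Hamilton cycle between $A$ and $B$: when $k$ is odd, any candidate extremal configuration admits a parity-changing re-routing that reconstitutes a Hamilton cycle.

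The main obstacle will be the intermediate case analysis: showing that every near-extremal $k$-partite graph with a small nonzero number of $A$--$B$ cross-edges either admits a Hamilton cycle via re-routing, or else falls into a listed family. Invoking the Ferrara--Jacobson--Powell characterization \cite{FJP} of balanced bipartite graphs near the Moon--Moser threshold may help handle the quasi-bipartite subcases, since the complement-of-matching structure inside each half, viewed relative to the crossing edges, is itself close to a balanced bipartite Hamiltonicity instance.
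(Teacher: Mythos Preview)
Your very first step fails: the claim that $\delta(G)\geq n/2-1$ yields a Hamilton path is false. Note that $n/2-1<(n-1)/2$ when $n$ is even, so the Dirac-type threshold for Hamilton paths is not met; concretely, $K_{n/2}\sqcup K_{n/2}$ has $\delta=n/2-1$, is a balanced $(n/2)$-partite graph, belongs to $\mathcal{F}_1$, and has no Hamilton path at all. More generally, $\mathcal{F}_1$ contains connected graphs with a cut-vertex, so your ``canonical disconnected family $K_{n/2}\sqcup K_{n/2}$'' misreads the target. The paper deals with this by first proving (Lemma~\ref{F1}) that any $G$ satisfying the hypothesis which is not $2$-connected must already lie in $\mathcal{F}_1$, and only then proceeds to a longest-cycle argument in the $2$-connected case.

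A second gap is that you never address the sporadic example $F_2$ on $8$ vertices. That graph is $2$-connected with $\alpha(F_2)=3<n/2=4$, so it cannot arise from a framework in which $A,B$ come from a large independent set produced by rotations. The paper's route is genuinely different from yours: after filtering out $\mathcal{F}_1$, it shows (Lemma~\ref{F2}) that in the $2$-connected case every longest cycle $C$ is strongly dominating unless $G\cong F_2$; then, picking $z\notin V(C)$, it proves that the successor set $S=\{v_{i+1}:v_i\in N(z)\}\cup(V(G)\setminus V(C))$ meets exactly $k/2$ of the parts $V_i$, yielding a canonical bipartition $[A,B]$ to which Chv\'atal's bipartite criterion (Theorem~\ref{chv}) is applied; when that criterion fails one reads off $G\in\mathcal{F}_3$. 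Your P\'osa-rotation plan, even if repaired to start from a longest cycle, gives no mechanism to produce this bipartition or to isolate $F_2$, and the appeal to \cite{FJP} is off-target since that result concerns balanced bipartite graphs at the Moon--Moser threshold $n/4$, not $k$-partite graphs at $n/2-1$.
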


\subsection{Overview}

We give the lower bound examples in Section \ref{sec:example}, we collect the main lemmas in Section \ref{sec:example} (while it is a combination of existing results, Lemma \ref{domcycle} may be of independent interest), we deal with the first two exceptions in Section \ref{sec:filter} before starting the main proof in Section \ref{sec:main}. Finally, in the Appendix, we collect some numerical lemmas which are needed because of the floors and ceilings appearing in \eqref{cf}.  

This project grew out of an earlier work of the first author together with Krueger, Pritikin, and Thompson \cite{DKPT}, where we considered Hamiltonian cycles in unbalanced $k$-partite graphs for $k\geq 3$.  The upcoming Example \ref{tight_gen} first appeared in a more general form in \cite{DKPT}.  In fact, it was this example which indicated to us that \eqref{cfgjl} is not always tight.  By using Theorem \ref{chv} and Lemma \ref{domcycle}, we were able to streamline the original proof of Chen et al.\ with the correct degree condition; however, because of the unexpected (to us) exceptional cases which arose when $k=\frac{n}{2}$, our overall proof didn't end up being any shorter than the original.  Again, we emphasize that Chen et al.\ have a beautiful result which places Dirac's theorem and Moon and Moser's theorem on a common spectrum.  It is only because of the fundamental nature of these results that we have expended the effort necessary to provide the tight degree condition in all cases.

\subsection{Notation}

For $S\subseteq V(G)$, we let $N(S) = \bigcup_{v}N(v)$ and $\overline{S}= V(G)\setminus S$.
Given disjoint sets $A, B\subseteq V(G)$, we let $\delta(A,B)=\min\{|N(v)\cap B|: v\in A\}|$.

Given a cycle $v_1v_2\dots v_kv_1$, $i\in [k]$, and an integer $t$, we assume that the addition in the indices, such as $v_{i+t}$, is taken modulo $k$.

\section{Tightness examples}\label{sec:example}

\begin{exa}\label{tight_gen}
For all $k\geq 2$ and all $n$ divisible by $k$, there exists a family $\mathcal{F}$ of balanced $k$-partite graphs on $n$ vertices such that for all $F\in \mathcal{F}$, 
$$\delta(F)\geq \dk-1,$$ but $F$ does not have a Hamiltonian cycle.  
\end{exa}

\begin{figure}[ht]
\centering
\begin{tikzpicture}[scale = .25]
\draw[pattern=north west lines, pattern color=gray] (-18, 1) rectangle (-16, 2);
\draw[pattern=north west lines, pattern color=gray] (-16, 1) rectangle (-12, 1.5);
\draw[pattern=north west lines, pattern color=gray] (-18, -6) rectangle (-4, 1);
\draw (-21, 1) node[left]{$\floor{\frac{\ceiling{\frac{n+1}{2}}}{\ceiling{\frac{k+1}{2}}}}$} -- (-19, 1);
\draw[dashed] (-20, 1) -- (8, 1);
\draw (-18, -6) node[below right]{$V_1$} rectangle (-16, 4);
\draw (-16, -6) node[below right]{$V_2$} rectangle (-14, 4);
\draw (-14, -6) rectangle (-12, 4);
\draw (-12, -6) rectangle (-10, 4);
\draw (-10, -6) rectangle (-8, 4);
\draw (-8, -6) rectangle (-6, 4);
\draw (-6, -6) node[below]{$~~~~~~~~~V_{\ceiling{\frac{k+1}{2}}}$} rectangle (-4, 4);
\draw (-4, -6) rectangle (-2, 4);
\draw (-2, -6) rectangle (0, 4);
\draw (0, -6) rectangle (2, 4);
\draw (2, -6) rectangle (4, 4);
\draw (4, -6) rectangle (6, 4);
\draw (6, -6) node[below]{$~~~~V_k$} rectangle (8, 4);
\end{tikzpicture}
\caption{The family of graphs $\mathcal{F}$.  The shaded sets represent $X_1, \dots, X_{\ceiling{\frac{k+1}{2}}}$.}
\end{figure}

\begin{proof}
First note that
\begin{equation}\label{equal}
\floor{\frac{n+2}{2\ceiling{\frac{k+1}{2}}}}=\floor{\frac{\ceiling{\frac{n+1}{2}}}{\ceiling{\frac{k+1}{2}}}}.
\end{equation}
Since if $k$ is even, then both sides of the equation equal $\floor{\frac{n+2}{k+2}}$; if $k$ is odd and $n$ is even, then both sides of the equation equal $\floor{\frac{n+2}{k+1}}$; and if $k$ is odd and $n$ is odd then we get that $\floor{\frac{n+1}{k+1}} = \floor{\frac{n+2}{k+1}}$, which is true since $\frac{n+2}{k+1}$ is not an integer. 

Let $\mathcal{F}$ be the family of graphs which can be obtained from a complete $k$-partite graph with parts $V_1, \dots, V_k$ such that $|V_i|=\frac{n}{k}$ for all $i\in [k]$, by selecting some $X_i\subseteq V_i$ for all $i\in [\ceiling{\frac{k+1}{2}}]$ such that $|X_1|\geq \dots\geq |X_{\ceiling{\frac{k+1}{2}}}|=\floor{\frac{\ceiling{\frac{n+1}{2}}}{\ceiling{\frac{k+1}{2}}}}$ and $|X_1\cup\dots\cup X_{\ceiling{\frac{k+1}{2}}}|=\ceiling{\frac{n+1}{2}}$.  Add all edges between parts except for those between a vertex in $X_i$ and $X_j$ for all $i, j \in [\ceiling{\frac{k+1}{2}}]$. Note that every $F\in \mathcal{F}$ has an independent set of size $\ceiling{\frac{n+1}{2}}$ and thus does not contain a Hamiltonian cycle.

Finally to see that the degree condition is satisfied, let $i\in [\ceiling{\frac{k+1}{2}}]$ and let $v\in X_{i}$.  We have by \eqref{equal} 
\begin{align*}
d(v)&=(1-\frac{1}{k})n-|X_1\cup \dots X_{i-1}\cup X_{i+1}\cup  \dots\cup X_{\ceiling{\frac{k+1}{2}}}|\\
&\geq (1-\frac{1}{k})n-\left(\ceiling{\frac{n+1}{2}}-\floor{\frac{\ceiling{\frac{n+1}{2}}}{\ceiling{\frac{k+1}{2}}}}\right)
=\dk-1. \qedhere
\end{align*}
\end{proof}

\begin{exa}\label{n=2k} Let $n\geq 8$ be divisible by $4$ and let $k=\frac{n}{2}$.
\begin{enumerate}[label=\emph{(\roman*)}]
\item\label{e1}  There exists a family $\mathcal{F}_1$ of balanced $k$-partite graphs on $n$ vertices such that for all $F_1\in \mathcal{F}_1$, $\delta(F_1)\geq \frac{n}{2}+\floor{\frac{n+2}{k+2}}-\frac{n}{k}=\frac{n}{2}-1$, but $\kappa(F_1)\leq 1$ and thus $F_1$ does not have a Hamiltonian cycle.

\item\label{e2} There exists a 2-connected balanced 4-partite graph $F_2$ on 8 vertices with $\alpha(F_2)=3$ such that $F_2$ does not have a Hamiltonian cycle.

\item\label{e3} There exists a family $\mathcal{F}_3$ of balanced $k$-partite graphs on $n$ vertices such that for all $F_3\in \mathcal{F}_3$, $\delta(F_3)\geq \frac{n}{2}+\floor{\frac{n+2}{k+2}}-\frac{n}{k}=\frac{n}{2}-1$, $\kappa(F_2)\geq 2$, and $\alpha(F_3)=\frac{n}{2}$, but $F_3$ does not have a Hamiltonian cycle.
\end{enumerate}
\end{exa}

\begin{figure}[ht]
\centering
\includegraphics[scale=1]{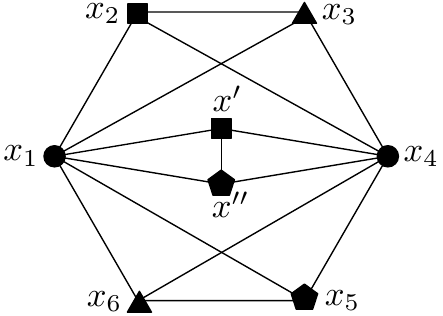}
\caption{The graph $F_2$.}\label{fig_F2}
\end{figure}

\begin{proof}
\begin{enumerate}
\item Let $V_i=\{x_i, y_i\}$ for all $i\in [k]$.  Add all edges inside $\{x_1, \dots, x_k\}$, add all edges from $y_k$ to $\{y_1, \dots, y_{k-1}\}$, and for all $i\in [k-1]$ add at least $k-1$ edges from $y_i$ to $\{y_1, \dots, y_{i-1}, y_{i+1}, \dots, y_{k-1}\}\cup \{x_{k}\}$.  Let $\mathcal{F}_1$ be the family of graphs thus obtained.  Note that every graph $F_1\in \mathcal{F}_1$ has $\delta(F_1)=k-1=\frac{n}{2}-1$ and $\kappa(F_1)\leq 1$ and thus $F_1$ does not have a Hamiltonian cycle.  

\item We let $F_2$ be the graph in Figure \ref{fig_F2} which can be seen to be a balanced 4-partite graph (with vertices of the same shape being in the same part of the partition) which is 2-connected and has $\alpha(F_2)=3$.  Note that $F_2$ has no Hamiltonian cycle since $G-x_1-x_4$ has three components.

\item Let the parts be labeled $X_1, \dots, X_{k/2}$, $Y_1, \dots, Y_{k/2}$ and let $X=\cup_{i=1}^{k/2} X_i$ and $Y=\cup_{i=1}^{k/2}Y_i$.  Let $y'\in Y_{k/2}$, let $y''\in Y\setminus \{y'\}$, and let $x'\in X$. Add all edges between $X$ and $Y\setminus \{y', y''\}$, all edges from $y''$ to $X\setminus \{x'\}$, and all edges from $y'$ to $\{x'\}\cup (Y\setminus Y_{k/2})$.  Furthermore, we may add any number of other edges between the parts $Y_1, \dots, Y_{k/2}$ and we may add the edge $x'y''$.  Let $\mathcal{F}_3$ be the family of graphs thus obtained.  Let $F_3\in \mathcal{F}_3$ and let $H$ be the bipartite graph induced by $[X,Y]$.  It is easily seen that $\delta(F_3)\geq \frac{n}{2}-1$, $\kappa(F_2)\geq 2$, and $\alpha(F_3)=\frac{n}{2}$.  Since $X$ is an independent set, if $F_3$ has a Hamiltonian cycle, it must be in $H$; however, since $y'$ has degree 1 in $H$, there is no Hamiltonian cycle in $H$.  
\end{enumerate}
\end{proof}

\section{General lemmas}\label{sec:lemmas}

In this section we state three general results which are useful for finding Hamiltonian cycles, beginning with two classics.

\begin{thm}[Dirac \cite{D}]\label{dir}
Let $n\geq d\geq 3$.  If $G$ is 2-connected and $\delta(G)\geq d/2$, then $G$ has a cycle of length at least $d$.  
\end{thm}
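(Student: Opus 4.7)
The plan is to prove Theorem \ref{dir}, the classical circumference theorem of Dirac, by contradiction. Let $C$ be a longest cycle in $G$ and suppose $|C| < d$. Since $|C| < d \leq n$, the set $V(G) \setminus V(C)$ is nonempty, so I would fix a component $H$ of $G - V(C)$ together with its attachment set $S = N(V(H)) \cap V(C)$. Two-connectivity of $G$ forbids a cut vertex separating $H$ from $V(C)$, which forces $|S| \geq 2$.

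Next I would extract a long path through $H$ together with two distinguished attachments to $C$. Take a longest path $P = u_0 u_1 \dots u_p$ inside $H$. Since $P$ is maximal in $H$ and $H$ is a component of $G - V(C)$, every neighbor of $u_0$ lies in $V(P) \cup V(C)$, and similarly for $u_p$. Writing $S_0 = N(u_0) \cap V(C)$ and $S_p = N(u_p) \cap V(C)$, the hypothesis $\delta(G) \geq d/2$ gives $|S_0|, |S_p| \geq \delta(G) - p$.

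The maximality of $|C|$ now produces two arc-length inequalities. For distinct $s \in S_0$ and $s' \in S_p$, the path $s u_0 u_1 \dots u_p s'$ through $H$ has length $p + 2$, and grafting it onto $C$ in place of the $C$-arc from $s$ to $s'$ yields a cycle, so that $C$-arc has length at least $p + 2$. For distinct $s, s'$ both in $S_0$ (respectively both in $S_p$), the two-step path $s u_0 s'$ (respectively $s u_p s'$) forces the shorter $C$-arc between $s$ and $s'$ to have length at least $2$. Placing the vertices of $S_0 \cup S_p$ cyclically on $V(C)$ and summing these lower bounds---with the important observation that any vertex of $S_0 \cap S_p$ triggers the stronger $\geq p + 2$ bound on both of its incident arcs, via routing $s u_0 P u_p s'$---yields the target inequality $|C| \geq 2\delta(G) \geq d$, the desired contradiction.

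The main obstacle is the final arc-counting step, where one must check every cyclic arrangement of $S_0$ and $S_p$ on $C$. In the disjoint case $S_0 \cap S_p = \emptyset$, the extremal arrangement groups the $S_0$-only and $S_p$-only vertices into two contiguous blocks, producing exactly two cross-type arcs of length $\geq p + 2$ and the remaining arcs of length $\geq 2$; this sums to $|C| \geq 4\delta(G) - 2p \geq 2\delta(G)$ since $p \leq \delta(G) - 1$. In the overlapping case, every arc adjacent to a vertex of $S_0 \cap S_p$ inherits the $\geq p + 2$ bound, which again pushes the total to at least $2\delta(G)$. A final sanity check handles the degenerate situations (such as $|V(H)| = 1$, where $p = 0$ and $S_0 = S_p$ has size $\geq \delta(G)$, giving $|C| \geq 2\delta(G)$ directly, and the very small configurations where $|S| > |S_0 \cup S_p|$ is needed to keep the argument consistent with $|S| \geq 2$).
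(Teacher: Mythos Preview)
The paper does not prove Theorem~\ref{dir}; it simply cites Dirac's classical result and uses it as a black box (see the sentence ``In this section we state three general results\dots beginning with two classics'' preceding the theorem). So there is no paper proof to compare against---you are supplying a proof where the authors chose not to.

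Your overall strategy (longest cycle $C$, a longest path $P=u_0\dots u_p$ in a component $H$ of $G-V(C)$, then arc-counting on $C$ via the attachment sets $S_0,S_p$) is indeed the standard route to Dirac's circumference theorem, and most of your sketch is sound. However, there is a genuine gap in the final accounting. You assert ``$|C|\geq 4\delta(G)-2p\geq 2\delta(G)$ since $p\leq \delta(G)-1$,'' but no reason is given for $p\leq \delta(G)-1$, and in general it can fail: the component $H$ may be large and contain a path far longer than $\delta(G)$. Relatedly, your argument silently assumes $|S_0|,|S_p|\geq 1$ (needed to get at least two cross-type arcs), yet if $p\geq \delta(G)$ one of these sets may well be empty, and your ``sanity check'' paragraph acknowledges but does not resolve this.

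The repair is not hard but does require a genuine extra idea. One option: note that from $|C|\geq 2|S_0|+2|S_p|+2p$ and $|S_0|+p\geq d(u_0)\geq\delta(G)$ you already get $|C|\geq 2\delta(G)+2|S_p|$, so the disjoint case is fine whenever $|S_p|\geq 1$; then handle $S_p=\emptyset$ (equivalently $S_0=\emptyset$) by a P\'osa-type rotation of $P$ to force an endpoint with a $C$-neighbour, or by choosing $P$ from the outset among longest $H$-paths whose endpoints both have $C$-neighbours (such paths exist since $|S|\geq 2$). Either fix closes the gap, but as written the proof is incomplete at exactly the step you flagged as ``the main obstacle.''
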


\begin{thm}[Chv\'atal \cite{C}]\label{chv}
Let $G=(U,V,E)$ be a bipartite graph on $n\geq 4$ vertices with vertex sets $U=\{u_1, \dots, u_{n/2}\}$ and $V=\{v_1, \dots, v_{n/2}\}$.  If for all $1\leq k<n/2$, $$d(v_k)\leq k \Rightarrow d(u_{n/2-k})\geq \frac{n}{2}-k+1,$$ 
then $G$ has a Hamiltonian cycle.
\end{thm}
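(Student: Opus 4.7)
My plan is a Chv\'atal-style maximal counterexample argument adapted to the bipartite setting. I would first observe that the hypothesis is monotone under edge addition: adding an edge can only shift the sorted degree sequence of each part upward, so $d'(v_k)\le k$ in $G'=G+e$ implies $d(v_k)\le k$ in $G$, and then the conclusion $d(u_{n/2-k})\ge n/2-k+1$ in $G$ passes up to $G'$. Thus one may pass to an edge-maximal $G$ satisfying the hypothesis but containing no Hamiltonian cycle; since $K_{n/2,n/2}$ is Hamiltonian for $n\ge 4$, such $G$ has a non-edge. I would pick a non-edge $u_0v_0$ maximizing $d(u_0)+d(v_0)$.

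By maximality, $G+u_0v_0$ is Hamiltonian, so $G$ contains a Hamiltonian $(u_0,v_0)$-path $u_0=z_1,z_2,\dots,z_n=v_0$ alternating between $U$ and $V$. Define
\[
A=\{i\text{ odd}:u_0z_{i+1}\in E(G)\},\qquad B=\{i\text{ odd}:z_iv_0\in E(G)\}.
\]
For any $i\in A\cap B$ the path closes into a Hamiltonian cycle $z_1z_2\cdots z_iz_nz_{n-1}\cdots z_{i+1}z_1$; hence $A\cap B=\emptyset$. Both $A$ and $B$ sit inside the $n/2$ odd indices of $[n-1]$, with $|A|=d(u_0)$ and $|B|=d(v_0)$, yielding the key bound $d(u_0)+d(v_0)\le n/2$.

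The extremal choice of $u_0v_0$ forces every non-neighbor of $u_0$ in $V$ to have degree at most $d(v_0)$, so at least $n/2-d(u_0)$ vertices of $V$ have degree at most $k:=d(v_0)$. Since $k\le n/2-d(u_0)$ by the previous bound, this gives $d(v_k)\le k$, and the hypothesis (applicable because $1\le k<n/2$) gives $d(u_{n/2-k})\ge n/2-k+1$. Symmetrically, each of the $n/2-d(v_0)$ non-neighbors of $v_0$ in $U$ has degree at most $d(u_0)$, so $d(u_{n/2-d(v_0)})\le d(u_0)$; combining these yields $d(u_0)\ge n/2-d(v_0)+1$, contradicting $d(u_0)+d(v_0)\le n/2$.

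The main nuisance rather than a genuine obstacle is handling the degenerate range of $k$. One must rule out $d(u_0)=0$ or $d(v_0)=0$, which would push $k$ or $n/2-k$ outside the range $[1,n/2-1]$ required by the hypothesis. An isolated vertex in an edge-maximal non-Hamiltonian $G$ is impossible, however, since adding any incident edge would also yield a non-Hamiltonian graph and hence contradict maximality; with $\delta(G)\ge 1$ the extremal non-edge automatically has $d(u_0),d(v_0)\ge 1$ and the argument above runs through uniformly.
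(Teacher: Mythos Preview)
The paper does not supply its own proof of Theorem~\ref{chv}; it is stated as a known result and cited to Chv\'atal~\cite{C}, then used as a black box in Section~\ref{sec:main}. So there is nothing in the paper to compare against directly.

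Your argument is correct and is precisely the standard Chv\'atal maximal-counterexample proof, transported to the balanced bipartite setting. The monotonicity step, the passage to an edge-maximal non-Hamiltonian $G$, the Hamiltonian $(u_0,v_0)$-path obtained from $G+u_0v_0$, the crossing sets $A,B$ on the odd positions giving $d(u_0)+d(v_0)\le n/2$, and the extremal non-edge trick forcing $d(v_k)\le k$ and $d(u_{n/2-k})\le d(u_0)$ are all fine. The boundary cases are also handled: $d(v_0)<n/2$ because $u_0v_0$ is a non-edge, and $\delta(G)\ge 1$ follows exactly as you say (an isolated vertex would allow a further edge without creating a Hamiltonian cycle, violating maximality), so $1\le k<n/2$ and the hypothesis may be invoked. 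The only cosmetic point is that the theorem as stated implicitly assumes the vertices of $U$ and $V$ are labelled in nondecreasing order of degree (this is how it is applied in the paper), and your proof uses that convention throughout; it would do no harm to say so explicitly.
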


The main lemma which we use to begin the proof of Theorem \ref{main} and Proposition \ref{prop:n=2k} is the following combination of the well known result of Nash-Williams \cite{NW} and a (slight weakening of a) result of Bauer, Veldman, Morgana, Schmeichel \cite{BVMS}.  We provide a proof for completeness.

We say that a cycle $C$ in a graph $G$ is \emph{strongly dominating} if $V(G)\setminus V(C)$ is an independent set and no two vertices of $\bigcup_{u\in V(G)\setminus V(C)}N(u)$ appear consecutively on $C$.

\begin{lem}[see {\cite[Lemmas 1,2,3,4]{NW}} and {\cite[Lemma 8]{BVMS}}]\label{domcycle}
Let $G$ be a graph on $n$ vertices.  If $G$ is 2-connected and $\delta(G)\geq \frac{n+2}{3}$, then every longest cycle of $G$ is strongly dominating.
\end{lem}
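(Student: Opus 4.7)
Let $C=v_1 v_2 \cdots v_c v_1$ be a longest cycle of $G$ and let $R = V(G)\setminus V(C)$. The first conclusion, that $R$ is independent, is the classical theorem of Nash-Williams. The argument is by contradiction: suppose some component $H$ of $G[R]$ has $|H|\geq 2$. Pick $u_1, u_2\in H$ joined by a path $P\subseteq H$, and note that $|N(u_i)\cap V(C)| \geq \delta - (|H|-1)$. Maximality of $C$ forbids any index $k$ with $v_k\in N(u_1)$ and $v_{k+1}\in N(u_2)$, since replacing the edge $v_k v_{k+1}$ by the path $v_k\, u_1\, P\, u_2\, v_{k+1}$ would produce a cycle longer than $C$. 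Counting pairs $(v_k, v_{k+1})$ on $C$ using this forbidden configuration, together with $\delta\geq (n+2)/3$, delivers the contradiction.

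For the strong-domination part, assume $R$ is independent and suppose for contradiction that some edge $v_j v_{j+1}$ of $C$ has both endpoints in $N(R)$. If a single $u\in R$ satisfies $\{v_j, v_{j+1}\}\subseteq N(u)$, then $v_j\, u\, v_{j+1}$ replaces $v_j v_{j+1}$ in $C$ and lengthens $C$; contradiction. So pick distinct $u_1, u_2\in R$ with $v_j\in N(u_1)$ and $v_{j+1}\in N(u_2)$, and set $A=N(u_1)$, $B=N(u_2)$. Independence of $R$ forces $A,B\subseteq V(C)$ with $|A|, |B|\geq \delta$, and maximality of $C$ implies that neither $A$ nor $B$ contains two consecutive vertices of $C$ (else the same one-vertex insertion lengthens $C$). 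The goal is to produce an index $k\neq j$ with $v_k\in A$ and $v_{k+1}\in B$; such a $k$ cannot be $j-1$ or $j+1$ (either choice would place two consecutive vertices in $A$ or $B$), so the cycle
\[
v_j\, u_1\, v_k\, v_{k-1}\cdots v_{j+1}\, u_2\, v_{k+1}\, v_{k+2}\cdots v_{j-1}\, v_j
\]
has length $c+2$, contradicting the choice of $C$.

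\textbf{Main obstacle.} The technical heart of the argument is producing this second crossing $k$. The naive estimate $|A\cap (B-1)|\geq |A|+|B|-c$ can be as small as $1$ under the tight hypothesis $\delta\geq (n+2)/3$, so counting alone is not enough. My plan is to analyze the state sequence $(\mathbf{1}_A(v_i), \mathbf{1}_B(v_i))_{i}$ around $C$, extracting the improved bound $c\geq 2\delta$ from the two non-consecutivity constraints, and then to invoke the $2$-connectivity of $G$: this provides a second internally disjoint $u_1$-to-$u_2$ path through $V(C)$, yielding a further incidence of $A$ and $B$ on consecutive cycle vertices and hence the required second crossing. This last step is the delicate point where the refinement of Bauer--Veldman--Morgana--Schmeichel beyond the Nash-Williams domination statement is genuinely needed.
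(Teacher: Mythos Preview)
Your Part~1 sketch is in the right spirit but omits the real work: the single forbidden configuration ``$v_k\in N(u_1)$ and $v_{k+1}\in N(u_2)$'' only yields $|N(u_1)\cap V(C)|+|N(u_2)\cap V(C)|\le c$, which combined with $|N(u_i)\cap V(C)|\ge \delta-(|H|-1)$ gives no contradiction for large $n$. The paper instead takes $u_1,u_r$ to be the \emph{endpoints of a longest path} $P$ in $G-C$ (so the off-cycle loss is exactly $r-1$, not $|H|-1$), uses the stronger observation that no two vertices of $N(u_1)\cup N(u_r)$ are consecutive on $C$ and that any vertex of $N(u_1)\cap N(u_r)$ forces a gap of length $r$ on each side, and then does a short case split on whether one of $N(u_1),N(u_r)$ contains the other.

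The real gap is in Part~2. You correctly note that a second index $k\neq j$ with $v_k\in A$, $v_{k+1}\in B$ yields a cycle of length $c+2$, and that this alone is one crossing short under the tight hypothesis. But the missing observation makes the $2$-connectivity detour unnecessary: if $v_i\in A$ with $i\neq j$ and $v_{i+2}\in B$, then
\[
v_j\,u_1\,v_i\,v_{i-1}\cdots v_{j+1}\,u_2\,v_{i+2}\,v_{i+3}\cdots v_{j-1}\,v_j
\]
is a cycle of length $c+1$ (it omits only $v_{i+1}$), again contradicting maximality of $C$. Hence each $v_i\in A\setminus\{v_j\}$ forbids \emph{two} positions $v_{i+1},v_{i+2}$ from $B$; since $A$ contains no two consecutive vertices these forbidden pairs are disjoint, and together with $v_{j+2}\notin B$ one obtains
\[
|B|\ \le\ c-(2|A|-1)\ \le\ (n-2)-2\delta+1,
\]
which contradicts $|B|\ge\delta\ge(n+2)/3$. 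This is precisely the paper's argument, and it invokes no $2$-connectivity in this step. Your proposed route via a second internally disjoint $u_1$--$u_2$ path does not obviously deliver the oriented crossing you need: the second path may meet $C$ with $v_k\in B$ and $v_{k+1}\in A$ (wrong orientation, which does not extend $C$), or may simply re-enter $C$ at $v_j$ or $v_{j+1}$.
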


\begin{proof}
Let $C=v_1v_2\dots v_kv_1$ be a longest cycle in $G$ and let $P=u_1u_2\dots u_r$ be a longest path in $G-C$.  If $r\leq 1$, then we are done; so suppose $r\geq 2$.  We have $k+r\leq n$ and note that by Theorem \ref{dir}, we have $k\geq 2\delta(G)\geq \frac{2n+4}{3}$ and thus 
\begin{equation}\label{nrlower}
3r+4\leq n.  
\end{equation}
Let $X=N(u_1)\cap V(C)$ and $Y=N(u_r)\cap V(C)$ and note that 
\begin{equation}\label{XYlower}
|X|, |Y|\geq \delta(G)-(r-1).
\end{equation}

The key observation is that by the maximality of $C$, no two vertices in $X\cup Y$ are consecutive along $C$, and furthermore if $v_i\in X\cap Y$, then none of $v_{i-r}, \dots, v_{i-1}, v_{i+1}, \dots, v_{i+r}$ are in $X\cup Y$.

First suppose that $X\subseteq Y$ or $Y\subseteq X$; without loss of generality $X\subseteq Y$.  In this case we have by \eqref{XYlower}, 
\begin{equation}\label{Xineq}
n-r\geq k\geq (r+1)|X|\geq (r+1)(\delta(G)-(r-1)).
\end{equation}

First suppose $r=2$, in which case \eqref{Xineq} becomes $n-2\geq 3\left(\frac{n+2}{3}-1\right)=n-1$, a contradiction.  Now suppose $r\geq 3$ in which case \eqref{Xineq} becomes $$n\leq \frac{3r^2-5r-5}{r-2}=3r+1-\frac{3}{r-2},$$ contradicting \eqref{nrlower}.  

Now suppose that $X\setminus Y\neq \emptyset$ and $Y\setminus X\neq \emptyset$.  There are vertices $v_i, v_j\in V(C)$ with the following properties: $v_i\in X\setminus Y$ and the next vertex $v_{i'}\in X\cup Y$ which appears after $v_i$ satisfies $v_{i'}\in Y$ (meaning that $i'\geq i+r+1$), and $v_j\in Y\setminus X$ and the next vertex $v_{j'}\in X\cup Y$ from $X\cup Y$ which appears after $v_j$ satisfies $v_{j'}\in X$ (meaning that $j'\geq j+r+1$).  Each vertex of $((X\setminus Y)\cup (Y\setminus X))\setminus \{v_i, v_j\}$ is followed by at least one vertex from $V(C)\setminus (X\cup Y)$ and each vertex of $(X\cap Y)\cup \{v_i, v_j\}$ is followed by at least $r$ vertices from $V(C)\setminus (X\cup Y)$.  So we have 
\begin{align*}
n-r\geq k&\geq 2(|X\setminus Y|+|Y\setminus X|-2)+(r+1)(|X\cap Y|+2)\\
&=|X|+|Y|+|X\cup Y|+(r-2)|X\cap Y|+2(r-1)\\
&\geq \min\{4\delta(G)-2(r-1), 3\delta(G)-1\}\\
&\geq \min\{4(n+2)/3-2(r-1), n+1\},
\end{align*}
where the second to last inequality is seen by using \eqref{XYlower} and splitting into cases whether $|X\cap Y|=0$ or not.  However, $4(n+2)/3-2(r-1)\leq n-r$ implies $n\leq 3r-14$, contradicting \eqref{nrlower}.

To see that the second part of the definition of strongly dominating is satisfied, suppose that $C=v_1\dots v_kv_1$ is a longest cycle and suppose $V(G)\setminus V(C)=\{u_1, \dots, u_r\}$ is an independent set.  If $|V(G)\setminus V(C)|\leq 1$, we are done, so suppose $r\geq 2$.  Let $X=N(u_1)\cap V(C)$ and suppose (without loss of generality) for contradiction that $v_1\in X$ and $v_2\in N(u_2)$.  By the maximality of $C$, this implies that $v_3\not \in N(u_2)$ and for all $i\geq 3$, if $v_i\in X$, then $v_{i+1}, v_{i+2}\not\in N(u_2)$.  Since $k\leq n-2$, this implies that 
$$\frac{n+2}{3}\leq |N(u_2)|\leq k-(2|N(u_1)|-1)\leq n-1-2\left(\frac{n+2}{3}\right)=\frac{n-7}{3},$$ a contradiction.
\end{proof}

\section{Filtering out $\mathcal{F}_1$ and $F_2$}\label{sec:filter}

We want to say that every balanced $k$-partite graph satisfying \eqref{cf} is both 2-connected and every longest cycle in strongly dominating; however, there are two exceptions and we deal with those exceptions before beginning the main proof in next Section.  

First, we show that every balanced $k$-partite graph satisfying \eqref{cf} is either $2$-connected or belongs to the family $\mathcal{F}_1$ in Example \ref{n=2k}.\ref{e1}.

\begin{lem}\label{F1}
Let $k\geq 3$, let $n$ be an integer such that $n\geq 2k$, and let $G$ be a balanced $k$-partite graph on $n$ vertices.  If $$\delta(G) \geq \dk,$$ then $G$ is 2-connected unless $n=2k$ and $G\in \mathcal{F}_1$ (see Example \ref{n=2k}.\ref{e1}).
\end{lem}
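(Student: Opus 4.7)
The plan is to prove the contrapositive: assuming $G$ is not 2-connected, show $n=2k$ and $G\in\mathcal{F}_1$. Since $\delta(G)\geq\dk\geq 1$ rules out isolated vertices, $G$ is either disconnected or has a cut vertex; in either case I pick a separator $S$ with $|S|\leq 1$ and a partition $V(G)\setminus S=A\sqcup B$ into nonempty sets with no edges between them, $|A|\leq |B|$. The key part-based bound is that for any $u\in A$ in part $V_{i(u)}$, $N(u)\subseteq(A\cup S)\setminus V_{i(u)}$, so
\[
d(u)\leq |A|-|A\cap V_{i(u)}|+1.
\]
Choosing $u$ in a part $V_{i^*}$ that maximizes $|A\cap V_{i^*}|$ gives $|A\cap V_{i^*}|\geq \lceil|A|/k\rceil$ by averaging, and hence
\[
\dk\leq |A|-\lceil|A|/k\rceil+1;
\]
the symmetric bound holds for $B$.

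To rule out $n\geq 3k$, I use that $m\mapsto m-\lceil m/k\rceil$ is nondecreasing, so the above inequality is sharpest when $|A|$ attains its maximum $\lfloor(n-1)/2\rfloor$. Splitting on the parity of $k$ (and, when $k$ is odd, on the parity of $n/k$, which governs whether $\lceil n/2\rceil=n/2$) and substituting the closed form \eqref{cf} for $\dk$, I verify the strict inequality
\[
\dk>\bigl\lfloor(n-1)/2\bigr\rfloor-\bigl\lceil\lfloor(n-1)/2\rfloor/k\bigr\rceil+1
\]
for every $n\geq 3k$ divisible by $k$ with $k\geq 3$, forcing $n=2k$. The individual floor-and-ceiling manipulations are elementary but fussy, so I would defer the tightest numerical lemmas to the Appendix.

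For $n=2k$ each part has exactly two vertices. When $k$ is odd, $\dk=k$, so the crude bound $\dk\leq |A|$ together with $|A|+|B|\leq 2k$ forces $|A|=|B|=k$ and $S=\emptyset$; the refined part-based bound then yields $k\leq k-1$, a contradiction, and $k$ must be even. For $k$ even, $\dk=k-1$ and the only feasible scenarios are $(|S|,|A|,|B|)=(1,k-1,k)$ or $(0,k,k)$. In the cut-vertex scenario, saturating the degree bounds forces $A\cap V_{i(v)}=\emptyset$, so every part $V_i\neq V_{i(v)}$ contains one $A$-vertex and one $B$-vertex while $V_{i(v)}=\{v,b\}$ with $b\in B$; applying the part-based bound vertex by vertex then pins down that $A\cup\{v\}$ induces a clique across parts, $b$ is adjacent to every other vertex of $B$, and each remaining vertex of $B$ is adjacent to at least $k-1$ of its $k$ admissible neighbours in $(B\cup\{v\})\setminus V_{i(\cdot)}$. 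After relabelling $v=x_k$, $b=y_k$, the $A$-vertices as $x_1,\dots,x_{k-1}$ and the other $B$-vertices as $y_1,\dots,y_{k-1}$, this matches the construction of $\mathcal{F}_1$ in Example~\ref{n=2k}.\ref{e1}; the disconnected subcase collapses to $G=K_k\sqcup K_k$, which is the member of $\mathcal{F}_1$ in which none of the optional edges $y_ix_k$ ($i<k$) is added.

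The main obstacle is the numerical step ruling out $n\geq 3k$: because the lower bound on $\dk$ is tight at small values of $n$ (for instance $n=3k$ with $k$ odd), the comparison cannot be done via a single asymptotic estimate and requires a careful split by the parities of $k$ and $n/k$. The structural step at $n=2k$ is then routine provided one tracks carefully, when applying the refined degree bound to each vertex, whether $v$ lies in the same part as that vertex.
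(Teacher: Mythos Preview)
Your overall strategy—assume a separator $S$ with $|S|\le 1$, bound degrees on each side, and isolate the $n=2k$ case—matches the paper's.  The substantive difference is in how you rule out $n\ge 3k$.  You use a one-sided pigeonhole bound $\delta(G)\le |A|-\lceil|A|/k\rceil+1$ and must then verify
\[
\dk>\bigl\lfloor\tfrac{n-1}{2}\bigr\rfloor-\bigl\lceil\lfloor\tfrac{n-1}{2}\rfloor/k\bigr\rceil+1,
\]
which, as you acknowledge, splits into several parity subcases.  The paper instead first reduces to the situation where every part $V_i$ meets both $A$ and $B$, then picks $u\in A$ and $v\in B$ in the \emph{same} part and sums:
\[
2\delta(G)\le d(u)+d(v)\le |A|+|B|-|V_i|=n-\tfrac{n}{k}.
\]
This is dispatched by the single clean inequality $2\,\dk>(1-\tfrac1k)n$ (Fact~\ref{ineq}\ref{f1},\ref{f2}), and as a bonus that inequality already covers $k$ odd, $n=2k$, so no separate treatment is needed there.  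Your route is correct but strictly more laborious; the two-vertex sum buys a much simpler numerical lemma.

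One small point on the $n=2k$ structural step: you describe $K_k\sqcup K_k$ as ``the member of $\mathcal{F}_1$ in which none of the optional edges $y_ix_k$ is added.''  In the definition of $\mathcal{F}_1$ (Example~\ref{n=2k}\ref{e1}), the target set $\{y_1,\dots,y_{k-1}\}\setminus\{y_i\}\cup\{x_k\}$ has exactly $k-1$ elements, so ``at least $k-1$ edges'' forces all of them—those $y_ix_k$ edges are not optional, and $K_k\sqcup K_k$ does not literally belong to $\mathcal{F}_1$ as written.  The paper's own proof reaches the same conclusion in its $C=\emptyset$ subcase, so this is a minor definitional wrinkle in the paper rather than a flaw specific to your argument.
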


\begin{proof} Let $V_1, \dots, V_k$ be the parts of $G$.  Suppose for contradiction that $\kappa(G)\leq 1$.  Let $A, B, C$ be a partition of $V(G)$ such that $|C|\leq 1$ and $G-C$ is not connected.

First suppose that there exists $i\in [k]$ such that $V_i\subseteq A\cup C$ or $V_i\subseteq B\cup C$.  Without loss of generality suppose $V_i\subseteq B\cup C$ and let $u\in A$ and $v\in B\cap V_i$.  We have 
$$2\delta(G)\leq d(u)+d(v)\leq |A|+|C|-1+|B|+|C|-\frac{n}{k}=(1-\frac{1}{k})n+|C|-1\leq (1-\frac{1}{k})n,$$
contradicting Fact \ref{ineq}.\ref{f1} when $k$ is even and $n\geq 3k$, and contradicting Fact \ref{ineq}.\ref{f2} when $k$ is odd and $n\geq 2k$.  

So unless $k$ is even and $n=2k$, we must have that for all $i\in [k]$, $V_i\cap A\neq \emptyset$ and $V_i\cap B\neq \emptyset$.  Either $C=\emptyset$ and we let $u\in A$ and $v\in B$, or $C\neq \emptyset$ and suppose without loss of generality that $V_1\cap C\neq \emptyset$ in which case we let $u\in V_1\cap A$ and $v\in V_1\cap B$.  Either way we have 
$$2\delta(G)\leq d(u)+d(v)\leq n-\frac{n}{k},$$
contradicting Fact \ref{ineq}.\ref{f1} when $k$ is even and $n\geq 3k$, and contradicting Fact \ref{ineq}.\ref{f2} when $k$ is odd and $n\geq 2k$.

Finally, suppose $k$ is even and $n=2k$ which implies $\delta(G)\geq \frac{n}{2}-1$.  For all $u\in A$ we have $\frac{n}{2}-1\leq d(u)\leq |A|+|C|-1$ which implies $|A|\geq \frac{n}{2}-|C|$ and for all $v\in B$ we have $\frac{n}{2}-1\leq d(v)\leq |B|+|C|-1$ which implies $|B|\geq \frac{n}{2}-|C|$.  If $C=\emptyset$, this implies $|A|=\frac{n}{2}$ and $|B|=\frac{n}{2}$.  If $C\neq \emptyset$, we have $|A|\geq \frac{n}{2}-1$ and $|B|\geq \frac{n}{2}-1$, so without loss of generality suppose $|A|+|C|=\frac{n}{2}$ and $|B|=\frac{n}{2}$.

If there exists $i\in [k]$ such that $V_i\subseteq A\cup C$ or $V_i\subseteq B$; say $V_i\subseteq A\cup C$, then for $u\in V_i\cap A$, we have 
\[\frac{n}{2}-1\leq d(u)\leq |A|+|C|-|V_i|=\frac{n}{2}-2,\] 
a contradiction.  Thus for all $i\in [k]$, we have $|V_i\cap (A\cup C)|=1$ and $|V_i\cap B|=1$.  Let $V_i=\{x_i,y_i\}$ for all $i\in [k]$ and let $X=\{x_1, \dots, x_k\}$ and $Y=\{y_1, \dots, y_k\}$ and suppose $X=A\cup C$ and $Y=B$.  So it must be the case that every vertex $u\in A$ is adjacent to precisely the vertices in $X\setminus \{u\}$ which means $G[X]$ is a clique.  Also every vertex $v\in Y$ is adjacent to at least $\frac{n}{2}-1$ of the $\frac{n}{2}$ vertices in $(Y\cup C)\setminus \{v\}$.  Thus $G\in \mathcal{F}_1$.  
\end{proof}

We now prove a lemma which shows that when $G$ is a 2-connected balanced $k$-partite graph satisfying \eqref{cf}, we either have that every longest cycle in $G$ is strongly dominating or $G$ is isomorphic to the graph $F_2$ in Example \ref{n=2k}.\ref{e2}.

\begin{lem}\label{F2}
Let $k\geq 3$ and let $n\geq 2k$ and let $G$ be a balanced $k$-partite graph on $n$ vertices.  If $G$ is 2-connected and $\delta(G)\geq \dk$, then every longest cycle of $G$ is strongly dominating unless $n=8$ and $G\cong F_2$ (see Example \ref{n=2k}.\ref{e2}).
\end{lem}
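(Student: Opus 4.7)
The plan is to reduce to Lemma \ref{domcycle} whenever possible and handle the sole exceptional case by hand. A numerical check (of the kind collected in the Appendix) shows that $\dk \geq (n+2)/3$ holds for every pair $(k, n)$ with $k \geq 3$, $n \geq 2k$, and $k \mid n$, \emph{except} the single case $(k, n) = (4, 8)$, where $\dk = 3 < 10/3$. In every non-exceptional case, Lemma \ref{domcycle} applies directly and yields that every longest cycle of $G$ is strongly dominating.

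It remains to handle $(n, k) = (8, 4)$, where $G$ is a 2-connected, balanced $4$-partite graph on $8$ vertices with $\delta(G) \geq 3$. Let $C = v_1 v_2 \dots v_\ell v_1$ be a longest cycle of $G$; by Theorem \ref{dir}, $\ell \geq 2\delta(G) \geq 6$. If $\ell = 8$ then $C$ is Hamiltonian, and if $\ell = 7$ then the single vertex outside $C$ cannot have two consecutive neighbors on $C$ (otherwise one could extend $C$ to a Hamiltonian cycle), so $C$ is strongly dominating in both cases. So assume $\ell = 6$ with $V(G) \setminus V(C) = \{u_1, u_2\}$. If $u_1 u_2 \notin E(G)$, then each $u_i$ has all of its $\geq 3$ neighbors on $C$, so by the maximality of $C$, $N(u_i) \cap V(C)$ is an independent set of size $\geq 3$ on the 6-cycle, which forces it to be one of $\{v_1, v_3, v_5\}$ or $\{v_2, v_4, v_6\}$. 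If $N(u_1) \cap V(C) \neq N(u_2) \cap V(C)$, then (after a suitable cyclic relabeling of $C$) $u_1 v_1 v_2 u_2 v_6 v_5 v_4 v_3 u_1$ is a Hamiltonian cycle, a contradiction; hence $N(u_1) \cap V(C) = N(u_2) \cap V(C)$ and $C$ is strongly dominating.

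Suppose instead $u_1 u_2 \in E(G)$. Then $V(G) \setminus V(C)$ is not independent, so $C$ is not strongly dominating, and the goal becomes to show $G \cong F_2$. The maximality of $C$ together with the edge $u_1 u_2$ forbids two consecutive vertices of $N(u_i)$ on $C$ for each $i$, as well as any pair $v_j \in N(u_1)$ and $v_{j'} \in N(u_2)$ at cyclic distance $1$ or $2$ on $C$. Combined with $|N(u_i) \cap V(C)| \geq 2$, a short pigeonhole argument on the 6-cycle yields $N(u_1) \cap V(C) = N(u_2) \cap V(C) = \{v_i, v_{i+3}\}$ for a single index $i$, and so $d(u_1) = d(u_2) = 3$. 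The principal remaining obstacle is then to leverage this rigid local picture, the 4-partite structure, $\delta(G) \geq 3$, and 2-connectivity to determine the partners of $v_2, v_3, v_5, v_6$, each of which requires at least one further neighbor in a limited set of positions; a finite case check then pins $G$ down to be isomorphic to the graph $F_2$ of Figure \ref{fig_F2}.
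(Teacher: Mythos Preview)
Your proposal is correct and follows essentially the same approach as the paper: first reduce to Lemma~\ref{domcycle} via the numerical inequality $\dk\ge (n+2)/3$ (which fails only at $(k,n)=(4,8)$), then handle the $(8,4)$ case by splitting on the length of $C$ and on whether the two leftover vertices are adjacent. The only difference is that the paper carries out the final ``finite case check'' explicitly (ruling out $x_6x_3$, $x_2x_5$, $x_6x_2$, $x_3x_5$ and forcing the remaining edges via $\delta(G)\ge 3$), whereas you leave it as a sketch; note also that neither the $4$-partite structure nor $2$-connectivity is actually needed in that last step---just $\delta(G)\ge 3$ and maximality of $C$ suffice.
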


\begin{proof}
We will show that, unless $n=8$ and $k=4$, we have $\delta(G)\geq \dk\geq \frac{n+2}{3}$ and thus we are done by Lemma \ref{domcycle}.

First suppose $k$ is odd, in which case $\dk=\ceiling{\frac{n}{2}}+\floor{\frac{n+2}{k+1}}-\frac{n}{k}$.  First note that when $k=3$ and $n=6$ or $n=9$ we have by direct inspection that $\ceiling{\frac{n}{2}}+\floor{\frac{n+2}{k+1}}-\frac{n}{k}\geq \frac{n+2}{3}$.  So in the remaining cases we have 
\begin{equation}\label{n10}
n\geq 10\geq 10-\frac{24k-60}{k^2+k-6}=\frac{2k(5k-7)}{k^2+k-6}.
\end{equation}
Thus, using Fact \ref{kodd}, we have
\begin{align*}
\ceiling{\frac{n}{2}}+\floor{\frac{n+2}{k+1}}-\frac{n}{k}-\frac{n+2}{3}&\geq \frac{n}{2}+\frac{n+2-(k-1)}{k+1}-\frac{n}{k}-\frac{n+2}{3}\\
&=\left(\frac{1}{6}-\frac{1}{k(k+1)}\right)n-\frac{2}{3}-\frac{k-3}{k+1}\\
&\stackrel{\eqref{n10}}{\geq} \left(\frac{1}{6}-\frac{1}{k(k+1)}\right)\frac{2k(5k-7)}{k^2+k-6}-\frac{2}{3}-\frac{k-3}{k+1}=0,
\end{align*}
as desired.

Now suppose $k$ is even, in which case $\dk=\frac{n}{2}+\floor{\frac{n+2}{k+2}}-\frac{n}{k}$.  Note that aside from the case $n=8$ and $k=4$ we have
\begin{equation}\label{n12}
n\geq 12\geq 12-\frac{2(k+18)(k-4)}{k^2+2k-12}=\frac{2k(5k-2)}{k^2+2k-12}.
\end{equation}
Thus
\begin{align*}
\frac{n}{2}+\floor{\frac{n+2}{k+2}}-\frac{n}{k}-\frac{n+2}{3}&\geq \frac{n}{2}+\frac{n+2-k}{k+2}-\frac{n}{k}-\frac{n+2}{3}\\
&=\left(\frac{1}{6}-\frac{2}{k(k+2)}\right)n-\frac{2}{3}-\frac{k-2}{k+2}\\
&\stackrel{\eqref{n12}}{\geq} \left(\frac{1}{6}-\frac{2}{k(k+2)}\right)\frac{2k(5k-2)}{k^2+2k-12}-\frac{2}{3}-\frac{k-2}{k+2}=0,
\end{align*}
as desired.

Finally suppose $n=8$ and $k=4$ and let $C$ be a longest cycle of $G$.  Since $G$ is 2-connected and $\delta(G)\geq \frac{8}{2}+\floor{\frac{10}{6}}-\frac{8}{4}=3$, Theorem \ref{dir} implies that $C$ has length at least $6$.  If $C$ had length at least 7, it would be a strongly dominating cycle, so suppose $C$ has length 6.  Let $C=x_1x_2x_3x_4x_5x_6$ and let $x'$ and $x''$ be the two vertices in $V(G)\setminus V(C)$.  If $x'x''\not\in E(G)$, then by the maximality of $C$ it is easily seen that, without loss of generality, $N(x')=N(x'')=\{x_1,x_3,x_5\}$ and thus $C$ is strongly dominating; so suppose that $x'x'' \in E(G)$.  

Without loss of generality suppose $x'x_1 \in E(G)$. If either $x''x_2\in E(G)$ or $x''x_6\in E(G)$, then $G$ has a Hamiltonian cycle; and if either $x''x_3\in E(G)$ or $x''x_5\in E(G)$, then $G$ has a cycle longer than $C$, a contradiction.  Since $\delta(G)\geq 3$, this forces $x''x_1,x''x_4 \in E(G)$.  By the same argument we get $x'x_4 \in E(G)$.  If $x_6x_3 \in E(G)$, then $x_6x_3x_2x_1x'x''x_4x_5x_6$ is a Hamiltonian cycle, so $x_6x_3 \not \in E(G)$, and by symmetry $x_2x_5 \not \in E(G)$. If $x_6x_2 \in E(G)$, then $x_6x_2x_3x_4x'x''x_1x_6$ is a cycle longer than $C$, a contradiction. So $x_6x_2 \not \in E(G)$ and by symmetry $x_3x_5 \not \in E(G)$. Since $\delta(G)\geq 3$, this forces $x_6x_4,x_5x_1,x_2x_4,x_3x_1 \in E(G)$. Therefore $G \cong \mathcal{F}_2$. 
\end{proof}

\section{Proof of Theorem \ref{main} and Proposition \ref{prop:n=2k}}\label{sec:main}

Let $k\geq 3$ and let $G$ be a balanced $k$-partite graph on $n$ vertices.  Let $V_1,V_2, \dots, V_k$ denote the parts and note that $|V_i|=\frac{n}{k}=:m $ for all $i\in [k]$.  Since the case $k=n$ is Dirac's theorem, we suppose $k\leq \frac{n}{2}$ and since the case $k=2$ is handled in \cite{MM} and \cite{FJP}, we suppose $k\geq 3$.  Furthermore, if $k=\frac{n}{2}$, we suppose that $G\not\in \mathcal{F}_1$ and $G\not\cong F_2$ (see Example \ref{n=2k}).  Now let $C$ be a maximum length cycle and suppose for contradiction that $C$ is not Hamiltonian.  By Lemma \ref{F1} and Lemma \ref{F2} we may assume that $C$ is strongly dominating.  

Without loss of generality, let 
$z\in V_1\setminus V(C)$.

Let $S=(V(G)\setminus V(C))\cup \{v_{i+1}: v_i\in N(z)\}$ and $R=(V(G)\setminus V(C))\cup \{v_{i-1}: v_i\in N(z)\}$ and note that 
\begin{equation} \label{SR}
|S|, |R|\geq \delta(G)+1.
\end{equation}  
Since $C$ is strongly dominating, both $S$ and $R$ are independent sets. 
For each $i \in [k]$, set $$S_i = S
\cap V_i \text{ and } R_i = R \cap V_i.$$ 
Define $\ell=|\{i\in [k]: S_i\neq \emptyset\}|$ and $\ell'=|\{i\in [k]: R_i\neq \emptyset\}|$ and without loss of generality suppose $$\ell\leq \ell'.$$  Furthermore, without loss of generality, we may suppose that 
\begin{align*}
S_i \neq \emptyset \text{ for all } i\in [\ell] \text{ and } S_j=\emptyset \text{ for all } j\in [k]\setminus [\ell].
\end{align*}

\begin{cla}\label{elllower}
$\ell, \ell'\geq \ceiling{\frac{k}{2}}$
\end{cla}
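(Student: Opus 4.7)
My plan is a size-counting argument using only two facts set up just before the claim: that $S$ is an independent set and that $|S| \geq \delta(G) + 1$. Since $\ell \leq \ell'$ by the WLOG convention, it suffices to prove $\ell \geq \ceiling{\frac{k}{2}}$; the bound $\ell' \geq \ceiling{\frac{k}{2}}$ is then immediate (alternatively, the identical argument applied to $R$ gives it directly). Because the relabeling ensures $S \subseteq V_1 \cup \cdots \cup V_\ell$ and each $|V_i| = m = n/k$, we have $|S| \leq \ell m$, and therefore
\[
\ell \;\geq\; \frac{|S|}{m} \;\geq\; \frac{\dk + 1}{m}.
\]
Since $\ell$ is an integer, it is enough to establish the strict inequality $(\dk + 1)/m > \ceiling{\frac{k}{2}} - 1$.

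I would verify this strict inequality by splitting on the parity of $k$. When $k$ is even, $\ceiling{\frac{k}{2}} - 1 = \frac{k}{2} - 1$ and $\dk = \frac{n}{2} - m + \floor{\frac{n+2}{k+2}}$, so after multiplying through by $m$ the required inequality collapses to $\floor{\frac{n+2}{k+2}} + 1 > 0$, which is trivial. When $k$ is odd, $\ceiling{\frac{k}{2}} - 1 = \frac{k-1}{2}$, and using $\ceiling{\frac{n}{2}} \geq \frac{n}{2}$ together with the worst-case floor bound $\floor{\frac{n+2}{k+1}} \geq \frac{n-k+2}{k+1}$ (valid since $n \geq 2k$), a short manipulation reduces the desired inequality to $\frac{n+3}{k+1} > \frac{n}{2k}$, which rearranges to $n(k-1) + 6k > 0$ and is trivially true for all $k \geq 2$.

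I do not expect any substantive obstacle beyond the algebraic bookkeeping needed to push the floors and ceilings in $\dk$ cleanly through the inequality; the claim is essentially a pigeonhole consequence of $|S| \geq \dk + 1$ together with the fact that $S$ avoids the $k - \ell$ full parts $V_{\ell+1}, \ldots, V_k$. Notably, no further structural input from the strongly dominating cycle $C$ beyond what already implies the independence and size of $S$ is needed.
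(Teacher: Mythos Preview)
Your proposal is correct and follows essentially the same approach as the paper: both argue that $|S|\geq \delta(G)+1>(\ceiling{\frac{k}{2}}-1)\frac{n}{k}$, forcing $\ell\geq \ceiling{\frac{k}{2}}$ by pigeonhole, and verify the strict inequality by a parity split. The only cosmetic difference is that in the odd-$k$ case the paper invokes Fact~\ref{kodd} (a slightly sharper floor bound exploiting parity) whereas you use the generic bound $\floor{\frac{n+2}{k+1}}\geq \frac{n-k+2}{k+1}$; both suffice, and your parenthetical ``valid since $n\geq 2k$'' is unnecessary since that floor estimate holds for all integers.
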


\begin{proof}
We claim that $|R|,|S|\geq \delta(G)+1>(\ceiling{\frac{k}{2}}-1)\frac{n}{k}$, which implies the result.  Indeed, we have 
\begin{equation}\label{k/2}
\delta(G) + 1 - (\ceiling{\frac{k}{2}} - 1)\frac{n}{k} \geq \ceiling{\frac{n}{2}} + \floor{\frac{n+2}{2\ceiling{\frac{k+1}{2}}}} + 1 - \frac{n}{k}\ceiling{\frac{k}{2}}.
\end{equation}
When $k$ is even, \eqref{k/2} reduces to $\floor{\frac{n+2}{k+2}} + 1 > 0$, and when $k$ is odd, by Fact \ref{kodd}, \eqref{k/2} reduces to  $\frac{n}{2}+\frac{n}{k+1}-\frac{k-3}{k+1}-\frac{n}{k}\left(\frac{k+1}{2}\right)+1=\frac{n}{k+1}-\frac{n}{2k}+1-\frac{k-3}{k+1}=\frac{(k-1)n}{2k(k+1)}+\frac{4}{k+1}>0$.
\end{proof}

\begin{cla}\label{Si}~
\begin{enumerate}[label=\emph{(\roman*)}]
\item \label{c1} For all $y \in S$, 
$|\overline {N(y)} \setminus  S| \leq n-2\delta(G)-1.$  For all $y \in R$, 
$|\overline {N(y)}  \setminus R| \leq n-2\delta(G)-1.$

\item \label{c2} For all $i\in [k]$, if $S_i\neq \emptyset$, then $|S_i| \geq  2\delta(G)+1-(1-\frac{1}{k})n\geq \frac{1}{2}(\frac{n}{k}-(\floor{\frac{n-1}{2}}-\delta(G))).$  For all $i\in [k]$, if $R_i\neq \emptyset$, then $|R_i| \geq  2\delta(G)+1-(1-\frac{1}{k})n\geq \frac{1}{2}(\frac{n}{k}-(\floor{\frac{n-1}{2}}-\delta(G))).$
\end{enumerate}
\end{cla}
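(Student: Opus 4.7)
My approach is to use that $S$ and $R$ are independent (by the strong domination of $C$) and that $|S|,|R|\ge \delta(G)+1$ by \eqref{SR} to force a large subset of $\overline{N(y)}$, and then to read off both conclusions by simple subtraction.  This gives (i) and the first inequality of (ii) almost immediately; the second inequality of (ii) reduces to a purely numerical comparison involving $\dk$.

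For (i), pick $y\in S$.  Independence of $S$ with $y\in S$ gives $S\subseteq \overline{N(y)}$, and $d(y)\ge \delta(G)$ gives $|\overline{N(y)}|\le n-\delta(G)$.  Therefore
\[
|\overline{N(y)}\setminus S|=|\overline{N(y)}|-|S|\le (n-\delta(G))-(\delta(G)+1)=n-2\delta(G)-1,
\]
and the argument for $y\in R$ is identical.

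For the first inequality of (ii), pick $y\in S_i$.  Now $k$-partiteness additionally gives $V_i\subseteq \overline{N(y)}$, and since $V_i$ is disjoint from $S\setminus S_i=S\setminus V_i$, we have $V_i\cup (S\setminus S_i)\subseteq \overline{N(y)}$.  So
\[
\tfrac{n}{k}+|S|-|S_i|=|V_i|+|S\setminus S_i|\le |\overline{N(y)}|\le n-\delta(G),
\]
and substituting $|S|\ge \delta(G)+1$ yields $|S_i|\ge 2\delta(G)+1-(1-\tfrac{1}{k})n$; the argument for $R_i$ is identical.

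The second inequality of (ii) is purely numerical.  Clearing fractions and using $\floor{\frac{n-1}{2}}=\ceiling{\frac{n}{2}}-1$, it is equivalent to $3\delta(G)\ge 2n-1-\tfrac{n}{k}-\ceiling{\frac{n}{2}}$.  Substituting $\delta(G)\ge \dk$ and separating on the parity of $n$ reduces this to
\[
\floor{\tfrac{n+2}{2\ceiling{\frac{k+1}{2}}}}\ge \tfrac{2n-k}{3k}\ (n\text{ even})\qquad\text{and}\qquad \floor{\tfrac{n+2}{k+1}}\ge \tfrac{2n-3k}{3k}\ (n\text{ odd, so $k$ odd}),
\]
which follow from $n\ge 2k$ together with the Appendix's numerical facts (split on the parity of $k$).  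The main obstacle is the $k$-even subcase, where the target is tight at the smallest examples (e.g.\ $(k,n)=(4,8)$ and $(6,12)$ give equality throughout); the crude estimate $\floor{x}\ge x-1$ fails, so one must use the divisibility $k\mid n$ to evaluate $\floor{\frac{n+2}{k+2}}$ essentially exactly, writing $n=mk$ and noting that $\frac{mk+2}{k+2}=m-\frac{2m-2}{k+2}$.
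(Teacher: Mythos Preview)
Your proof is correct and follows essentially the same approach as the paper.  Your argument for (i) and the first inequality of (ii) is the paper's argument verbatim (the paper phrases (ii) as $V_i\setminus S_i\subseteq \overline{N(y)}\setminus S$ and then applies (i), which is your computation rearranged).  For the second inequality of (ii) the paper simply cites Fact~\ref{ineq}.\ref{f3} from the Appendix, which after multiplying out is exactly the inequality $3\delta(G)\ge (2-\tfrac1k)n-\floor{\tfrac{n-1}{2}}-2$ you reduce to; your case split on the parity of $n$ and $k$ and the remark about needing $k\mid n$ rather than the crude $\floor{x}\ge x-1$ mirror the proof of Fact~\ref{ineq}.\ref{f3} (via Facts~\ref{ff}.\ref{ff2} and~\ref{ff}.\ref{ff4}).
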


\begin{proof}
\begin{enumerate}
\item Since $C$ is a longest cycle of $G$, the vertex subsets
$N(y)$, $S$, and $\overline{N(y)}  \setminus S$ are pairwise disjoint for all $y \in S$.
Thus
$ n= |N(y)| + |S| + |\overline{N(y)}  \setminus S|\geq 2\delta(G)+1+|\overline{N(y)}  \setminus S|$, where the inequality holds by \eqref{SR}.  Thus $|\overline {N(y)}  \setminus S| \leq n-2\delta(G)-1.$  Similarly, $N(y)$, $R$, and $\overline{N(y)}  \setminus R$ are pairwise disjoint for all $y \in R$, so $|\overline{N(y)}  \setminus R| \leq n - 2\delta(G) - 1$

\item Let $y\in S_i$.  We have that $V_i\setminus S_i\subseteq \overline {N(y)}  \setminus S$ so by (i) we have that $|S_i|\geq \frac{n}{k}-|\overline {N(y)}  \setminus S|\geq 2\delta(G)+1-(1-\frac{1}{k})n$ as desired.   Similarly, if $y \in R_i$, then by (i) we have that $|R_i|\geq \frac{n}{k} - |\overline{N(y)}  \setminus R| \geq 2\delta(G) + 1 - (1 - \frac{1}{k})n$.

Finally, we have $2\delta(G)+1-(1-\frac{1}{k})n\geq \frac{1}{2}(\frac{n}{k}-(\floor{\frac{n-1}{2}}-\delta(G)))$ by Fact \ref{ineq}.\ref{f3}.\qedhere
\end{enumerate}
\end{proof}

\begin{cla}\label{zVi}~
For all $i\in [k]$, if $S_i\cap R_i\neq \emptyset$, then $|N(z)\cap V_i| \leq \floor{\frac{n-1}{2}}-\delta(G).$  
\end{cla}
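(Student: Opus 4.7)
The plan is to pick any $y\in S_i\cap R_i$ and exploit the triple independence at $y$. Since $G$ is $k$-partite, $V_i$ is independent; since $C$ is strongly dominating by Lemma~\ref{domcycle}, both $S$ and $R$ are independent. As $y$ lies in each of $V_i$, $S$, $R$, this forces $N(y)\subseteq V(G)\setminus(V_i\cup S\cup R)$, and the identical argument applied to $z\in V_1\cap S\cap R$ (recall $V(G)\setminus V(C)\subseteq S\cap R$) gives $N(z)\subseteq V(G)\setminus(V_1\cup S\cup R)$. In particular, $N(z)\cap V_i$ is disjoint from $S_i\cup R_i$, yielding the key disjointness estimate
\[
|N(z)\cap V_i|+|S_i\cup R_i|\leq\tfrac{n}{k}.
\]
The case $i=1$ gives $N(z)\cap V_1=\emptyset$ and is trivial, so I may assume $i\neq 1$.

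Next I would apply Claim~\ref{Si}(ii) to $S_i$ and $R_i$, both nonempty since $y\in S_i\cap R_i$. Using its second form $|S_i|,|R_i|\geq\tfrac{1}{2}(\tfrac{n}{k}-X)$, where $X:=\floor{\tfrac{n-1}{2}}-\delta(G)$, inclusion--exclusion produces $|S_i\cup R_i|\geq\tfrac{n}{k}-X-|S_i\cap R_i|$, and the previous display reduces to
\[
|N(z)\cap V_i|\leq X+|S_i\cap R_i|.
\]

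Absorbing the remaining $|S_i\cap R_i|$ is the heart of the argument. For this I would apply Claim~\ref{Si}(ii) at the part $V_1$ as well (valid because $z\in S_1\cap R_1$), obtaining the parallel estimate $|S_1\cup R_1|\geq\tfrac{n}{k}-X-|S_1\cap R_1|$, and couple it with the degree bound $|(S\cup R)\setminus V_i|\leq n-\tfrac{n}{k}-\delta(G)$ that follows from $N(y)\subseteq V\setminus V_i\setminus(S\cup R)$ and $|N(y)|\geq\delta(G)$. The two part-level lower bounds sum to at least $2\tfrac{n}{k}-2X-|S_1\cap R_1|-|S_i\cap R_i|$, and combining this with the upper bound on $|S\cup R|$ coming from the $y$-degree estimate together with $|S_i\cup R_i|\leq\tfrac{n}{k}-|N(z)\cap V_i|$ should leave only enough room for $|N(z)\cap V_i|\leq X$, else produce a contradiction to $\delta(G)\geq\dk$.

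The hard part will be the final numerical comparison: the tightness of the degree condition leaves essentially no slack, and the floors and ceilings in $X$ and in Claim~\ref{Si}(ii) must be tracked carefully. If the direct pass falls short in a boundary case, I would split according to whether $y\in V(C)$: when $y\in V(C)$, the very definition of $S_i\cap R_i$ on the cycle forces $y^-,y^+\in N(z)$, so $C':=(C\setminus\{y\})\cup\{z\}$, obtained by replacing the subpath $y^-yy^+$ with $y^-zy^+$, is another longest cycle, strongly dominating by Lemma~\ref{domcycle}; the whole argument then reruns on $(C',y)$ with the roles of $y$ and $z$ interchanged, providing the symmetric bound needed to close the remaining gap.
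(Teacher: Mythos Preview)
Your approach has a genuine gap at the ``absorption'' step. After reaching $|N(z)\cap V_i|\le X+|S_i\cap R_i|$, the plan to eliminate the surplus $|S_i\cap R_i|$ by combining the part-level lower bounds for $|S_1\cup R_1|$ and $|S_i\cup R_i|$ with the upper bound $|(S\cup R)\setminus V_i|\le n-\tfrac{n}{k}-\delta(G)$ does not succeed: summing those two lower bounds against the two upper bounds (the one from your step~3 and the $y$-degree bound) only yields
\[
|N(z)\cap V_i|\ \le\ n-\delta(G)-\tfrac{2n}{k}+2X+|S_1\cap R_1|+|S_i\cap R_i|,
\]
which for $k\ge 5$ is \emph{weaker} than the bound you already had. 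Replacing the halved second form of Claim~\ref{Si}\ref{c2} by its sharper first form does not help either: the resulting inequality is exactly Fact~\ref{ineq}\ref{f3} shifted by $|S_i\cap R_i|\ge 1$, so it fails by precisely that amount. Finally, swapping $y$ and $z$ on $C$ produces the analogous inequality for $|N(y)\cap V_1|$, which says nothing about $|N(z)\cap V_i|$; no symmetric pairing closes the gap.

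The paper's proof is a direct three-line count on $C$ that avoids Claim~\ref{Si} entirely. The observation you are missing is that once $y\in S\cap R$, one has $N(y)\cap(S\cup R)=\emptyset$, and together with the maximality of $C$ this forces that \emph{no two vertices of $N(z)\cup N(y)$ are consecutive on $C$}: a vertex of $N(y)$ immediately following one of $N(z)$ would lie in $S$, one immediately preceding would lie in $R$, and two consecutive vertices of $N(z)$ (resp.\ $N(y)$) are ruled out by the maximality of $C$ (if $y\in V(C)$, insert $z$ at $y$'s position and $y$ between the two consecutive neighbours to obtain a longer cycle). Hence
\[
n-1\ \ge\ |V(C)|\ \ge\ 2\,|N(z)\cup N(y)|\ =\ 2\bigl(d(y)+|N(z)\setminus N(y)|\bigr)\ \ge\ 2\bigl(\delta(G)+|N(z)\cap V_i|\bigr),
\]
the last step because $y\in V_i$ forces $N(z)\cap V_i\subseteq N(z)\setminus N(y)$. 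Rearranging and taking the floor gives the claim immediately.
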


\begin{proof}

Let $2\leq i\leq k$ such that $S_i\cap R_i\neq \emptyset$ and let $y \in S_i\cap R_i$. So $y$ is a successor along $C$ of some vertex 
in $N(z)$, and a predecessor along $C$ of some vertex in $N(z)$ as well. 
Since $C$ is a longest cycle of $G$, neither $N(z)$ nor $N(y)$ contains
two consecutive vertices of $C$, so $N(y)\cap (S\cup R)
= \emptyset$. Thus, 
$$n-1\geq  |V(C)| \geq 2 |N(z)\cup N(y)| = 2(d(y) + |N(z)  \setminus N(y)|)
\geq 2(d(y) +|N(z)\cap V_i|).$$
Rearranging gives the result.
\end{proof}

\begin{cla}\label{ellupper}
$\frac{\ell+\ell'}{2}< \ceiling{\frac{k+1}{2}}$
\end{cla}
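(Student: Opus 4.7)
I argue by contradiction; suppose $\ell + \ell' \geq 2 \ceiling{\frac{k+1}{2}}$.

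By Claim \ref{Si}(ii), every nonempty $S_i$ (resp.\ $R_i$) satisfies $|S_i| \geq \alpha$ (resp.\ $|R_i| \geq \alpha$), where $\alpha := 2\delta(G) + 1 - (1 - \frac{1}{k})n$. Summing these lower bounds over the $\ell$ indices in $\{i : S_i \neq \emptyset\}$ and the $\ell'$ indices in $\{i : R_i \neq \emptyset\}$ gives
\[
|S| + |R| \;\geq\; \alpha(\ell + \ell') \;\geq\; 2 \alpha \ceiling{\frac{k+1}{2}}.
\]
On the other hand, $S$ and $R$ are independent sets in $G$ (because $C$ is strongly dominating), and every vertex $y \in S$ has its $\geq \delta(G)$ neighbors entirely outside $S$; hence $|S|, |R| \leq n - \delta(G)$, so
\[
|S| + |R| \;\leq\; 2(n - \delta(G)).
\]
Combining these two bounds yields the numerical inequality
\[
\alpha \ceiling{\frac{k+1}{2}} \;\leq\; n - \delta(G).
\]
Substituting the hypothesis $\delta(G) \geq \dk$, expanding $\alpha$, and clearing the floors and ceilings using the arithmetic identities in the Appendix (Fact \ref{ineq}, together with Fact \ref{kodd} when $k$ is odd) should violate this inequality and yield the desired contradiction.

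If the straightforward bound $|S| \leq n - \delta(G)$ is not sufficiently sharp in some borderline regime (for instance when $\ell = \ell' = \ceiling{\frac{k+1}{2}}$), I sharpen the upper bound on $|S|$ by invoking Claim \ref{zVi}. Since $z \in V_1 \setminus V(C) \subseteq S_1 \cap R_1$, the index $i=1$ lies in $B := \{i : S_i \cap R_i \neq \emptyset\}$, and for every $i \in B$ Claim \ref{zVi} gives $|N(z) \cap V_i| \leq \floor{\frac{n-1}{2}} - \delta(G)$. Together with $|N(z) \cap V_1| = 0$ and the trivial bound $|N(z) \cap V_j| \leq \frac{n}{k}$ for $j \notin B$, this tightens the upper bound on $d(z)$, and hence on $|S| = p + d(z)$, where $p = |V(G)\setminus V(C)|$.

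The main obstacle will be the numerical verification, which is delicate due to the floor and ceiling terms appearing in $\dk$; this forces a case split on the parity of $k$ and careful invocation of the Appendix facts to ensure that the inequality becomes strict.
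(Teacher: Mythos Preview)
Your primary approach has a genuine gap: the pair of bounds $|S|+|R|\geq \alpha(\ell+\ell')$ and $|S|+|R|\leq 2(n-\delta(G))$ is simply too weak to force a contradiction except when $n$ is small relative to $k$. Concretely, take $k=3$ and $n=30$, so $\delta(G)\geq 13$, $\alpha=2\cdot 13+1-30+10=7$, $\ceiling{\frac{k+1}{2}}=2$, and $n-\delta(G)=17$. Your inequality $\alpha\ceiling{\frac{k+1}{2}}\leq n-\delta(G)$ becomes $14\leq 17$, which is perfectly consistent --- no contradiction. In fact an asymptotic check shows that for fixed $k$ and $n\to\infty$ your left side grows like $\frac{(k-1)n}{2k}$ while the right side grows like $\frac{n}{2}+\frac{n}{k(k+1)}$, so the approach fails whenever $n$ is moderately large.

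The paper does \emph{not} bound $|S|+|R|$ at all. Instead it bounds $d(z)$ directly by observing that $S_i\cup R_i\subseteq \overline{N(z)}\cap V_i$ for every $i$, and then shows that each of the $\ell+\ell'-2$ sets $S_2,\dots,S_\ell,R_{i_2},\dots,R_{i_{\ell'}}$ contributes at least $\beta:=\frac{1}{2}\bigl(\frac{n}{k}-(\floor{\frac{n-1}{2}}-\delta(G))\bigr)$ to $|\overline{N(z)}\setminus V_1|$. The point of Claim~\ref{zVi} is exactly to handle the case where $S_i$ and $R_i$ are both nonempty and overlap in some $V_i$: then one cannot add $|S_i|+|R_i|$, but Claim~\ref{zVi} guarantees $|\overline{N(z)}\cap V_i|\geq 2\beta$ directly. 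This yields $\delta(G)\leq d(z)\leq (1-\tfrac{1}{k})n-\beta(\ell+\ell'-2)$, which solves cleanly to $\frac{\ell+\ell'}{2}<\ceiling{\frac{k+1}{2}}$. Your fallback sketch gestures toward using Claim~\ref{zVi} to tighten an upper bound on $d(z)$, but this is not a ``borderline fix'' --- it is the entire argument, and the specific way it interacts with the per-part decomposition (handling the overlap of $S_i$ and $R_i$ within each $V_i$) is the idea you are missing.
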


\begin{proof}
Let $i_1\leq i_2\leq \dots\leq i_{\ell'}$ be the indices such that $R_{i_j}\neq \emptyset$ for all $j\in [\ell']$.  By Claim \ref{Si}.\ref{c2} and  Claim \ref{zVi} and the fact that $z\in S_1\cap R_1$ we see that each of the sets $S_2, \dots, S_\ell$, $R_{i_2}, \dots, R_{i_{\ell'}}$ contributes at least $\frac{1}{2}\left(\frac{n}{k}-\left(\floor{\frac{n-1}{2}}-\delta(G)\right)\right)$ to $|\overline{N(z)} \setminus V_1|$.  So we have
$$
\delta(G)\leq d(z) \leq (1-\frac{1}{k})n-\frac{1}{2}\left(\frac{n}{k}-\left(\floor{\frac{n-1}{2}}-\delta(G)\right)\right)(\ell+\ell'-2).
$$

Solving the above inequality for $\frac{\ell+\ell'}{2}$, we have 
$$
\frac{\ell+\ell'}{2}\leq  \frac{\ceiling{\frac{n+1}{2}}}{\floor{\frac{n+2}{2\ceiling{\frac{k+1}{2}}}}+1}\leq \frac{\frac{n+2}{2}}{\frac{n+2}{\tck}-\frac{\tck-1}{\tck}+1}=\frac{n+2}{n+3}\ck<\ck,$$
as desired.  
\end{proof}

Since we are supposing without loss of generality that $\ell\leq \ell'$, we have by Claim \ref{elllower} and Claim \ref{ellupper}  that $$\ceiling{\frac{k}{2}}\leq \ell<\ceiling{\frac{k+1}{2}}.$$  Thus if $k$ is odd, we have a contradiction.  So for the rest of the proof we will suppose that $k$ is even and consequently by Claim \ref{elllower} and \ref{ellupper}, we have $\ell=\frac{k}{2}$.  

\subsection{$k$ is even and $\ell=\frac{k}{2}$}

Let $$A = \bigcup_{i=1}^{\ell}V_i ~\text{ and }~ B = \bigcup_{i= \ell+1}^kV_i,$$ and let $H$ be the bipartite graph induced by $[A,B]$. Label the vertices of $A$ as $u_1, \dots, u_{n/2}$ such that $d_H(u_1)\leq \dots\leq d_H(u_{n/2})$ and label the vertices of $B$ as $v_1, \dots, v_{n/2}$ such that $d_H(v_1)\leq \dots\leq d_H(v_{n/2})$.  Recall that $S\subseteq A$.

Since we are in the case where $k$ is even, \eqref{cf} reduces to $$\delta(G)\geq \frac{n}{2}+\floor{\frac{n+2}{k+2}}-\frac{n}{k}.$$ 


\begin{cla}\label{AB}~
\begin{enumerate}[label=\emph{(\roman*)}]
\item\label{SB} $\delta(S, B)\geq \frac{n}{2}+2\floor{\frac{n+2}{k+2}}-\frac{2n}{k}+1$, with equality only if $S_i=V_i$ for some $i\in [\ell]$ 
\item\label{ASB} $\delta(A\setminus S, B)\geq \floor{\frac{n+2}{k+2}}\geq |A\setminus S|+1$, 
\item\label{BA} $\delta(B,A)\geq \floor{\frac{n+2}{k+2}}\geq |A\setminus S|+1$.
\end{enumerate}
\end{cla}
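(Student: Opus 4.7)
My plan is to prove the three parts of Claim \ref{AB} by bounding $|N(y) \cap A|$ from above for suitable vertices $y$ and then subtracting this bound from $d(y) \geq \delta(G)$.

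For part \ref{SB}, fix $y \in S_i$ with $i \in [\ell]$. Since $G$ is $k$-partite and $S$ is an independent set (because $C$ is strongly dominating), $N(y)$ is disjoint from both $V_i$ and $S$. Since $S_i \subseteq V_i$, these two forbidden sets overlap exactly in $S_i$, so
\[
|N(y) \cap A| \leq |A| - |V_i| - (|S| - |S_i|) = \frac{n}{2} - \frac{n}{k} - |S| + |S_i|.
\]
This gives $|N(y) \cap B| \geq \delta(G) - \frac{n}{2} + \frac{n}{k} + |S| - |S_i|$. Substituting the lower bound $|S| \geq \delta(G) + 1$ from \eqref{SR} and the trivial bound $|S_i| \leq n/k$ makes this collapse to $2\delta(G) + 1 - n/2$, which meets the claimed value after plugging in the hypothesis on $\delta(G)$. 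Equality in the final bound forces $|S_i| = n/k$, i.e.\ $S_i = V_i$, as asserted.

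For parts \ref{ASB} and \ref{BA} the analogous calculation is cleaner because the independence of $S$ is not used. For any $w \in A \setminus S$ (respectively $w \in B$) lying in some part $V_j$, the $k$-partite structure alone gives $|N(w) \cap A| \leq \frac{n}{2} - \frac{n}{k}$ (respectively $|N(w) \cap B| \leq \frac{n}{2} - \frac{n}{k}$), so the degree across the bipartition $[A,B]$ is at least $\delta(G) - \frac{n}{2} + \frac{n}{k} \geq \floor{\frac{n+2}{k+2}}$, which establishes the first inequality in each case.

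The subtler point is the second inequality $\floor{\frac{n+2}{k+2}} \geq |A \setminus S| + 1$. Combining $|A \setminus S| = n/2 - |S|$ with $|S| \geq \delta(G) + 1$ and the degree hypothesis yields $|A \setminus S| + 1 \leq \frac{n}{k} - \floor{\frac{n+2}{k+2}}$, so the assertion reduces to the numerical inequality $2 \floor{\frac{n+2}{k+2}} \geq \frac{n}{k}$, valid whenever $k$ is even, $k \mid n$, and $n \geq 2k$ (a fact I expect to appear in the Appendix). The extremal case $n = 2k$ is especially tight: the inequality holds with equality and forces $|A \setminus S| = 0$, i.e.\ $A = S$---a strong structural consequence that will presumably be exploited later in the argument. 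This tight numerical step, rather than the neighborhood bounds themselves, is the main place where care is needed.
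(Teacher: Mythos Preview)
Your proof is correct and follows essentially the same approach as the paper. The only cosmetic difference is in part~\ref{SB}: the paper invokes the earlier Claim~\ref{Si}\ref{c1} (bounding $|\overline{N(y)}\setminus S|$) and then uses $d(y,B)\ge |B|-|\overline{N(y)}\setminus S|$, whereas you bound $|N(y)\cap A|$ directly via $N(y)\cap(V_i\cup S)=\emptyset$; unwinding Claim~\ref{Si}\ref{c1} shows these are the same computation, and both routes yield $d(y,B)\ge 2\delta(G)+1-\tfrac{n}{2}$ with the same equality analysis. For parts~\ref{ASB} and~\ref{BA} your argument matches the paper's exactly, including the reduction of the second inequality to $2\lfloor\tfrac{n+2}{k+2}\rfloor\ge\tfrac{n}{k}$, which is precisely Fact~\ref{ff}\ref{ff1} in the Appendix.
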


\begin{proof}
\begin{enumerate}
\item This follows from  Claim \ref{Si}.\ref{c1} since for all $y\in S$,  $$d(y, B)\geq |B|-|\overline{N(y)} \setminus S|\geq |B|-(n-2\delta(G)-1)\geq \frac{n}{2}+2\floor{\frac{n+2}{k+2}}-\frac{2n}{k}+1.$$
If we have equality above, this implies that $\overline{N(y)} \setminus S\subseteq B$, which in particular implies that if $y\in V_i$, then $V_i\setminus S_i=\emptyset$.

\item We have 
\begin{align*}
\delta(A\setminus S, B)&\geq \delta(G)-(\frac{n}{2}-\frac{n}{k})\\
&\geq \floor{\frac{n+2}{k+2}}
\geq \frac{n}{k}-\floor{\frac{n+2}{k+2}}
\geq \frac{n}{2}-\delta(G)\stackrel{\eqref{SR}}{\geq}  \frac{n}{2}-|S|+1=|A\setminus S|+1,
\end{align*}
where the third inequality holds by Fact \ref{ff}.\ref{ff1}. \qedhere

\item Since $\delta(B,A)\geq \delta(G)-(\frac{n}{2}-\frac{n}{k})$, the rest of the calculation is the same as in  \ref{ASB}.
\end{enumerate}
\end{proof}

Before proceeding with the rest of the proof, we finally filter out $\mathcal{F}_3$.

\begin{cla}\label{n/2-1}
If $\delta(G)\geq \frac{n}{2}-1$, then either $G$ has a Hamiltonian cycle or $k=\frac{n}{2}$ and $G\in \mathcal{F}_3$.
\end{cla}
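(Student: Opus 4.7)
The plan is to exploit the unusually strong hypothesis $\delta(G) \geq n/2 - 1$ to pin down the structure of $G$ very tightly. First I would observe that the existing inequality $|S| \geq \delta(G) + 1 \geq n/2$ together with $S \subseteq A$ and $|A| = n/2$ forces $S = A$, so $A$ is an independent set of size $n/2$. Consequently any Hamiltonian cycle of $G$ must alternate between $A$ and $B$, i.e.\ live inside the bipartite graph $H := G[A,B]$, and the task reduces to either producing a Hamiltonian cycle in $H$ via Chv\'atal's theorem or reading off the structure of $\mathcal{F}_3$.

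Next I would compute the relevant $H$-degree bounds: $d_H(u) = d_G(u) \geq n/2 - 1$ for every $u \in A$ (since $A$ is independent), and $d_H(v) \geq \delta(G) - (n/2 - n/k) = n/k - 1$ for every $v \in B$. Sorting both sides by non-decreasing $H$-degree, Chv\'atal's condition at index $j \geq 2$ is automatic since $d_H(u_{n/2 - j}) \geq n/2 - 1 \geq n/2 - j + 1$, so the only place it can fail is at $j = 1$, and there only if $d_H(v_1) \leq 1$. When $k < n/2$ the divisibility $k \mid n$ forces $n/k \geq 3$, hence $d_H(v_1) \geq 2$, so Chv\'atal yields a Hamiltonian cycle in $H$ (hence in $G$), contradicting the standing assumption. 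So every $k < n/2$ is cleanly disposed of.

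The remaining case is $k = n/2$ (so $|V_i| = 2$), where $d_H(v_1)$ can genuinely equal $1$. Assuming no Hamiltonian cycle, failure of Chv\'atal at $j = 1$ forces both $d_H(v_1) = 1$ and $d_H(u_{n/2 - 1}) \leq n/2 - 1$; I set $y' := v_1$ and let $x'$ be its unique $H$-neighbor. Since each $u \in A \setminus \{x'\}$ satisfies $d_H(u) \geq n/2 - 1$ and is forced to miss $y'$, it is joined in $H$ to precisely $B \setminus \{y'\}$; meanwhile $x'$ is either joined to all of $B$ or misses exactly one vertex $y'' \in B \setminus \{y'\}$. These two subcases will correspond to whether the optional edge $x'y''$ in the $\mathcal{F}_3$ recipe is included. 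Using $d_G(y') \geq n/2 - 1$ together with $d_H(y') = 1$ and $|V_{y'}| = 2$, I conclude that $y'$ is joined in $G$ to all of $B \setminus V_{y'}$.

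The main (but mild) obstacle I anticipate is the final matching step: relabelling $V_{y'}$ as $Y_{k/2}$ and the parts of $A$ as $X_1, \dots, X_{k/2}$, then choosing $y''$ appropriately (the unique non-neighbor of $x'$ in the first subcase, any vertex of $B \setminus \{y'\}$ in the second, in which case $x'y''$ is automatically an edge), and verifying edge-by-edge that $G$ arises from the construction defining $\mathcal{F}_3$. Unspecified edges within $B$ are absorbed by the clause allowing \emph{any} number of extra edges between $Y_1, \dots, Y_{k/2}$, so no further work is required. This places $G$ in $\mathcal{F}_3$ and completes the proof.
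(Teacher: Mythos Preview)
Your proposal is correct and follows essentially the same route as the paper: deduce $S=A$ from $|S|\ge\delta(G)+1\ge n/2$, note $A$ is independent so $d_H(u)\ge n/2-1$ for $u\in A$ and $d_H(v)\ge n/k-1$ for $v\in B$, apply Chv\'atal's bipartite criterion (which can only fail at index $j=1$, forcing $n=2k$), and then read off the $\mathcal{F}_3$ structure from the single low-degree vertex $y'\in B$. The paper is terser---it cites the earlier Claim~\ref{AB} for the degree bounds and invokes Theorem~\ref{chv} in one line---while you spell out the Chv\'atal verification and the relabelling into the $\mathcal{F}_3$ template more explicitly, but there is no substantive difference.
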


\begin{proof}
By \eqref{SR} and the fact that $\ell=\frac{k}{2}$, we have $|S|=\frac{n}{2}$ and thus $A=S$ which means $A$ is an independent set.  So by Claim \ref{AB}.\ref{SB}, we have $\delta(A,B)\geq \frac{n}{2}-1$.  Furthermore we have by Claim \ref{AB}.\ref{BA} that $\delta(B,A)\geq \floor{\frac{n+2}{k+2}}$.  If $\delta(B,A)\geq 2$, then by Theorem \ref{chv}, $G$ has a Hamiltonian cycle.  

So suppose $\delta(B,A)=1$ which implies $n=2k$.  In this case there is a vertex $y' \in B$ such that $y'$ only has one neighbor in $A$, say $x'$.  We have $\delta(A,B)\geq \frac{n}{2}-1=|B|-1$ so every vertex in $A\setminus \{x'\}$ is adjacent to every vertex in $B\setminus \{y'\}$.  Since $d(y', A)=1$ and $d(y')=\frac{n}{2}-1$, it must be the case that $y'$ is adjacent to everything in $B$ except the other vertex in its own part.  Now we have all the edges between $A \setminus \{x'\}$ and $B \setminus \{y'\}$, the edge $x'y'$, all the edges from $y'$ to $B$ excluding the vertex in its own part, and we have all but possibly one edge from $x'$ to $B \setminus \{y'\}$, so $G \in \mathcal{F}_3$.
\end{proof}

Now for the rest of the proof we may suppose that $n\geq 3k$ (i.e. $m\geq 3$).  We now use Theorem \ref{chv} to show that $H$, and therefore $G$, has a Hamiltonian cycle.   

Suppose there exists $i\in [\frac{n}{2}]$ such that $d_H(v_i)\leq i$.  By Claim \ref{AB}.\ref{BA}, we must have 
\begin{equation}\label{i}
i\geq \delta(B, A)\geq \floor{\frac{n+2}{k+2}}.
\end{equation}

\noindent
\tbf{Case 1} ($\frac{n}{2}-i\leq |A\setminus S|$)  By Claim \ref{AB}.\ref{ASB} we have $d_H(u_{\frac{n}{2}-i})\geq \delta(A\setminus S, B) \geq |A\setminus S|+1\geq \frac{n}{2}-i+1$.

\noindent
\tbf{Case 2} ($\frac{n}{2}-i\geq |A\setminus S|+1$)

\tbf{Case 2.1} ($k\geq 6$) Note that since $k\geq 6$, when $3k\leq n\leq 5k$, we have $\delta(G)\geq \frac{n}{2}-1$ and thus we are done by Claim \ref{n/2-1}.  So for remainder of this case, suppose $n\geq 6k$ (i.e. $m\geq 6$).

By Claim \ref{AB}.\ref{SB} $$d_H(u_{\frac{n}{2}-i})\geq \delta(S, B)\geq \frac{n}{2}+2\floor{\frac{n+2}{k+2}}-\frac{2n}{k}+1\geq \frac{n}{2}-\floor{\frac{n+2}{k+2}}+1\stackrel{\eqref{i}}{\geq} \frac{n}{2}-i+1,$$
where the third inequality holds by Fact \ref{ff}.\ref{ff2} since $m\geq 6$ and $k\geq 6$. 

Thus the conditions of Theorem \ref{chv} are satisfied and therefore $H$ has a Hamiltonian cycle.

\tbf{Case 2.2} ($k=4$)

By Claim \ref{AB}.\ref{SB}, we either don't have equality and thus $$d_H(u_{\frac{n}{2}-i})\geq \delta(S, B)\geq \frac{n}{2}+2\floor{\frac{n+2}{k+2}}-\frac{2n}{k}+2\geq \frac{n}{2}-\floor{\frac{n+2}{k+2}}+1\stackrel{\eqref{i}}{\geq} \frac{n}{2}-i+1,$$
where the third inequality holds by Fact \ref{ff}.\ref{ff2} since $m\geq 3$ and $k\geq 4$, and thus we are done as in the previous case; or $\delta(S, B)= \frac{n}{2}+2\floor{\frac{n+2}{k+2}}-\frac{2n}{k}+1$ and without loss of generality, $S_1=V_1$.

When $3k\leq n\leq 4k$, we have $\delta(G)\geq \frac{n}{2}-1$ and thus we are done by Claim \ref{n/2-1}.  So for remainder of this case, suppose $n\geq 5k$ (i.e. $m\geq 5$).  Also note that 
since $k=4$, \eqref{cf} reduces to $$\delta(G) \geq \frac{n}{4}+\floor{\frac{n+2}{6}}.$$

\begin{cla}\label{42}~
\begin{enumerate}[label=\emph{(\roman*)}]
\item \label{42i} $|S_2|\geq \floor{\frac{n+2}{6}}+1$.
\item \label{42ii} $\delta(S_1, B)\geq 2\floor{\frac{n+2}{6}}+1$.
\item \label{42iii} $\delta(S_2, B)\geq \delta(G)\geq \frac{n}{4}+\floor{\frac{n+2}{6}}$
\end{enumerate}
\end{cla}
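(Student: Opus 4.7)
The plan centers on the equality assumption that triggers this subcase, namely that the inequality in Claim \ref{AB}.\ref{SB} is tight, which (after relabeling) forces $S_1 = V_1$. Once that structural fact is in hand, all three parts should follow quickly from bookkeeping and the fact that $S$ is an independent set (proved at the beginning of Section \ref{sec:main} from $C$ being strongly dominating). I would prove the parts in the order (ii), (iii), (i), since (ii) is immediate, (iii) exploits the independence of $S$ together with $S_1=V_1$, and (i) uses the size of $S_1$ together with \eqref{SR}.

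For part (ii), I would simply note that $S_1 \subseteq S$ and specialize Claim \ref{AB}.\ref{SB} to $k=4$: the bound $\frac{n}{2}+2\floor{\frac{n+2}{k+2}}-\frac{2n}{k}+1$ collapses to $2\floor{\frac{n+2}{6}}+1$, so $\delta(S_1,B) \geq \delta(S,B) \geq 2\floor{\frac{n+2}{6}}+1$. For part (iii), the key observation is that, because $S$ is an independent set and $V_1 = S_1 \subseteq S$, no vertex of $S_2$ has a neighbor in $V_1$. Since vertices of $S_2 \subseteq V_2$ also have no neighbors inside $V_2$ (parts of the $k$-partite graph are independent), every neighbor of such a vertex lies in $V_3 \cup V_4 = B$. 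Hence $\delta(S_2,B) = \min_{y \in S_2} d(y) \geq \delta(G) \geq \frac{n}{4} + \floor{\frac{n+2}{6}}$.

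For part (i), I would use $S_1 = V_1$ together with the fact that, because $\ell=2$, we have $S \subseteq V_1 \cup V_2$ and the partition $S = S_1 \cup S_2$ is disjoint. Then $|S_1| = \frac{n}{4}$, and \eqref{SR} gives
\[
|S_2| \;=\; |S| - |S_1| \;\geq\; \delta(G) + 1 - \tfrac{n}{4} \;\geq\; \floor{\tfrac{n+2}{6}} + 1,
\]
where the last inequality uses $\delta(G) \geq \frac{n}{4} + \floor{\frac{n+2}{6}}$.

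I do not anticipate any serious obstacle here; each conclusion is a short consequence of facts already established in the excerpt, and the only nontrivial input is recognizing that the equality case of Claim \ref{AB}.\ref{SB} yields $S_1 = V_1$ and combining this with the independence of $S$.
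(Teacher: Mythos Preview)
Your argument is correct and matches the paper's proof in parts \ref{42i} and \ref{42iii}. The only difference is in part \ref{42ii}: you obtain the bound directly by specializing Claim \ref{AB}.\ref{SB} to $k=4$ (so that $\frac{n}{2}+2\lfloor\frac{n+2}{6}\rfloor-\frac{n}{2}+1=2\lfloor\frac{n+2}{6}\rfloor+1$), whereas the paper instead proves \ref{42i} first and then argues that a vertex of $S_1$ has at most $|V_2\setminus S_2|$ neighbours in $V_2$ (since $S$ is independent), hence at least $\delta(G)-(\frac{n}{4}-|S_2|)\geq 2\lfloor\frac{n+2}{6}\rfloor+1$ neighbours in $B$. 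Your route is slightly more economical here because it recycles Claim \ref{AB}.\ref{SB} rather than re-deriving the count, and it does not require \ref{42i} as an input to \ref{42ii}; the paper's route, on the other hand, makes the dependence on $|S_2|$ explicit. Either way the logic is sound.
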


\begin{proof}
\begin{enumerate}
\item Since $|S|\geq \delta(G)+1$, we have $$|S_2|=|S|-|S_1|\geq |S|-\frac{n}{4}\geq \delta(G)+1-\frac{n}{4}=\floor{\frac{n+2}{6}}+1.$$

\item Each vertex in $S_1$ has at most $|V_{2}\setminus S_{2}|$ neighbors in $V_{2}$ and thus by \ref{42i}, at least $\frac{n}{4}+\floor{\frac{n+2}{6}}-(\frac{n}{4}-|S_{2}|)\geq \floor{\frac{n+2}{6}}+\floor{\frac{n+2}{6}}+1=2\floor{\frac{n+2}{6}}+1$ 
neighbors in $B$.

\item Since $S_1=V_1$, the vertices in $S_2$ have no neighbors in $A$ and thus all of their neighbors are in $B$. \qedhere
\end{enumerate}
\end{proof}

We are in the case where $\frac{n}{2}-i\geq |A\setminus S|+1$, so if
\begin{equation}\label{i2}
\frac{n}{2}-i\leq \frac{n}{2}-\floor{\frac{n+2}{6}}-1, 
\end{equation}
then by Claim \ref{42}.\ref{42ii} we have
$$d_H(u_{\frac{n}{2}-i})\geq \delta(S_1, B)\geq 2\floor{\frac{n+2}{6}}+1\geq \frac{n}{2}-\floor{\frac{n+2}{6}}\stackrel{\eqref{i2}}{\geq} \frac{n}{2}-i+1,$$
where the third inequality holds by Fact \ref{ff}.\ref{ff2} (in particular $3\floor{\frac{n+2}{6}}+1\geq 3(\frac{n+2-4}{6})+1=\frac{n}{2}$).
Otherwise together with \eqref{i}, we have
$\frac{n}{2}-i=\frac{n}{2}-\floor{\frac{n+2}{6}},$
so by Claim \ref{42}.\ref{42i},\ref{42iii} we have
$$d_H(u_{\frac{n}{2}-i})\geq \delta(S_2, B)\geq\frac{n}{4}+\floor{\frac{n+2}{6}}\geq \frac{n}{2}-\floor{\frac{n+2}{6}}+1= \frac{n}{2}-i+1,$$
where the third inequality holds by Fact \ref{ff}.\ref{ff1} since $m\geq 5$ and $k=4$.

This completes the proof of Theorem \ref{main} and Proposition \ref{prop:n=2k}. \qed

\newpage

\section{Appendix: Numerical lemmas}\label{sec:appendix}

The level of precision in \eqref{cf} necessitates a careful handling of floors and ceilings throughout the paper.  We collect a number of such required facts here.  

First, to see how \eqref{cf} compares to \eqref{cfgjl} we note the following fact which in particular implies that \eqref{cf} is not obtained by simply replacing the strict inequality in \eqref{cfgjl} with a weak inequality (or the combination of a weak inequality and a floor).

\begin{fact}
Let $n\geq k\geq 2$ such that $n$ is divisible by $k$.  Then
\begin{equation}\label{eqc}
\dk=\ceiling{\frac{n}{2}+\frac{n}{2\ceiling{\frac{k+1}{2}}}-\frac{n}{k}} 
\end{equation}
if and only if 
\begin{enumerate}
\item $k$ is even and $n\equiv k \bmod (k+2)$
\item $k$ is odd, $n$ is even, and $n\equiv k-1 \bmod (k+1)$
\item $k$ is odd, $n$ is odd, and $n\equiv j \bmod (k+1)$ for some $j\in \{k-1, 1,3, \dots, \frac{k+1}{2}\}$
\end{enumerate}
and 
\begin{equation}\label{eqf}
\dk=\floor{\frac{n}{2}+\frac{n}{2\ceiling{\frac{k+1}{2}}}-\frac{n}{k}} 
\end{equation}
otherwise.
\end{fact}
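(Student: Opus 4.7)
The plan is to compare the integer $\dk$ directly to the real number $T := \frac{n}{2}+\frac{n}{2\ceiling{(k+1)/2}}-\frac{n}{k}$ by tracking the fractional contributions of its summands.  Since $k \mid n$, the term $n/k$ is an integer and can be set aside, so the question reduces to locating $\floor{T}$ and $\ceiling{T}$ relative to $\dk$ in terms of residues of $n$.  The natural splits are on the parity of $k$, and, when $k$ is odd, further on the parity of $n$.

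For $k$ even, $2\ceiling{(k+1)/2} = k+2$ and $n/2 \in \mathbb{Z}$.  Writing $n = q(k+2)+r$ with $0 \le r < k+2$, evenness of both $n$ and $k+2$ forces $r$ even.  A direct calculation gives $\floor{(n+2)/(k+2)} = q$ when $r \le k-2$ and $q+1$ when $r = k$, and similarly pins down $\floor{T}$ and $\ceiling{T}$; comparing with $\dk$ isolates $r = k$ as the unique case in which $\dk = \ceiling{T} \ne \floor{T}$, which matches the first condition.

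For $k$ odd, $2\ceiling{(k+1)/2} = k+1$.  If $n$ is even, the analysis is formally identical to the even-$k$ case (with $k+2$ replaced by $k+1$); here $r = n \bmod (k+1)$ is even since $k+1$ is even, and the same bookkeeping yields $r = k-1$ as the sole exceptional residue, giving the second condition.  If $n$ is odd, then $n/2$ contributes a fractional $1/2$ and we may write $T = M + \tfrac{1}{2} + \tfrac{r}{k+1}$ for an integer $M$ and $r = n \bmod (k+1)$; here $r$ is odd, so $r \in \{1,3,\dots,k\}$.  The fractional part $\tfrac{1}{2}+\tfrac{r}{k+1}$ is less than, equal to, or greater than $1$ according as $r < \tfrac{k+1}{2}$, $r = \tfrac{k+1}{2}$ (possible only when $\tfrac{k+1}{2}$ is odd), or $r > \tfrac{k+1}{2}$, while $\floor{(n+2)/(k+1)}$ jumps from $q$ to $q+1$ precisely at $r = k$ (the unique odd residue $\ge k-1$).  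A careful tabulation of $\dk - \floor{T}$ and $\dk - \ceiling{T}$ against the odd residues then produces the list in the third condition.

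The principal obstacle is this last case: the fractional $1/2$ coming from $n/2$ combines with $r/(k+1)$ and can cross an integer boundary, and the borderline value $r = \tfrac{k+1}{2}$ (when $(k+1)/2$ is odd) makes $T$ itself an integer and must be isolated.  Once that edge case is treated separately, the exceptional residue sets fall out of the tabulation, and the ``otherwise'' clause \eqref{eqf} is immediate in all remaining cases.
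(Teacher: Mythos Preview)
Your proposal is correct and follows essentially the same approach as the paper: both split on the parity of $k$ (and, for $k$ odd, on the parity of $n$) and reduce the comparison of $\dk$ with $\floor{T}$ and $\ceiling{T}$ to a residue computation modulo $k+2$ or $k+1$. The paper compresses each case into a single floor/ceiling identity (e.g.\ $\floor{\frac{n+2}{k+2}}=\ceiling{\frac{n}{k+2}}$ versus $\floor{\frac{n}{k+2}}$) and leaves the residue check implicit, whereas you write $n=q(k{+}2)+r$ explicitly and tabulate; the content is the same, and your handling of the borderline $r=\frac{k+1}{2}$ in case~(iii) is exactly the care the paper's terse identity $\floor{\frac{n+2}{k+1}}=\ceiling{\frac{n-\frac{k+1}{2}}{k+1}}$ hides.
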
 

\begin{proof} 
\begin{enumerate}
\item 
Since $k$ divides $n$ and $k$ is even, this implies $n$ is even.  So \eqref{eqc} holds when $\floor{\frac{n+2}{k+2}}=\ceiling{\frac{n}{k+2}}$ and \eqref{eqf} holds when $\floor{\frac{n+2}{k+2}}=\floor{\frac{n}{k+2}}$.

\item 
Suppose $k$ is odd and $n$ is even.  So \eqref{eqc} holds when $\floor{\frac{n+2}{k+1}}=\ceiling{\frac{n}{k+1}}$ and \eqref{eqf} holds when $\floor{\frac{n+2}{k+1}}=\floor{\frac{n}{k+1}}$.

\item Suppose $k$ is odd and $n$ is odd.  So \eqref{eqc} holds when $\frac{n+1}{2}+\floor{\frac{n+2}{k+1}}=\ceiling{\frac{n}{2}+\frac{n}{k+1}}$ which is equivalent to $\floor{\frac{n+2}{k+1}}=\ceiling{\frac{n-\frac{k+1}{2}}{k+1}}$ and \eqref{eqf} holds when $\frac{n+1}{2}+\floor{\frac{n+2}{k+1}}=\floor{\frac{n}{2}+\frac{n}{k+1}}$ which is equivalent to $\floor{\frac{n+2}{k+1}}=\floor{\frac{n-\frac{k+1}{2}}{k+1}}$. \qedhere
\end{enumerate}
\end{proof}


In the case when $k$ is odd, the following fact shows that in order to estimate $\dk$ from below, it suffices to consider the case when $n$ is even.

\begin{fact}\label{kodd}
Let $k$ be an odd integer with $k\geq 3$ and let $n$ be divisible by $k$.  Then $$\ceiling{\frac{n}{2}}+\floor{\frac{n+2}{k+1}}-\frac{n}{k}\geq \begin{cases} \frac{n}{2}+\frac{n}{k+1}-\frac{n}{k}+\frac{1}{2}-\frac{k-2}{k+1} & : n \text{ odd }   \\
\frac{n}{2}+\frac{n}{k+1}-\frac{n}{k}-\frac{k-3}{k+1} & : n \text{ even }\end{cases}
\geq \frac{n}{2}+\frac{n}{k+1}-\frac{n}{k}-\frac{k-3}{k+1}.$$
\end{fact}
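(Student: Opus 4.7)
The plan is to split into cases based on the parity of $n$, after writing $r := (n+2) \bmod (k+1) \in \{0,1,\ldots,k\}$, so that $\floor{(n+2)/(k+1)} = (n+2-r)/(k+1)$. The first chain of inequalities in the statement will reduce (after substituting this identity and the appropriate value of $\ceiling{n/2}$) to simple linear inequalities in $r$, and the second chain is a short direct computation.

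For the first inequality, consider the case $n$ odd. Then $\ceiling{n/2} = (n+1)/2$, and the difference between the left-hand side and the claimed right-hand side equals $(2-r)/(k+1) + (k-2)/(k+1) = (k-r)/(k+1)$, which is nonnegative since $r \leq k$. Now consider $n$ even. Then $\ceiling{n/2} = n/2$, and the same difference becomes $(2-r)/(k+1) + (k-3)/(k+1) = (k-1-r)/(k+1)$, which is nonnegative iff $r \leq k-1$. The key observation here is a parity argument: since $k$ is odd, $k+1$ is even, and since $n$ is even, $n+2$ is even as well, so $r = (n+2) - (k+1)\floor{(n+2)/(k+1)}$ is the difference of two even numbers and therefore $r$ is even. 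Since $r \in \{0,1,\ldots,k\}$ and $k$ is odd, this forces $r \leq k-1$, as required.

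For the second inequality, when $n$ is even both sides are identical, so there is nothing to check. When $n$ is odd, after cancelling the common terms $n/2 + n/(k+1) - n/k$, the inequality reduces to $1/2 - (k-2)/(k+1) \geq -(k-3)/(k+1)$, i.e., $1/2 \geq 1/(k+1)$, which holds since $k \geq 3$.

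The main obstacle is simply keeping track of the floors and ceilings carefully; once the residue $r$ is introduced, each case collapses to a one-line inequality, and the only substantive point is the parity observation that $r$ must be even (hence at most $k-1$) in the case where both $n$ and $k+1$ are even.
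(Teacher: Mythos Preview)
Your proof is correct and follows essentially the same case split as the paper's: writing $\lceil n/2\rceil$ explicitly according to the parity of $n$ and then lower-bounding $\lfloor (n+2)/(k+1)\rfloor$. In fact you are more careful than the paper, which in the even case simply asserts $\lfloor (n+2)/(k+1)\rfloor \geq (n+2-(k-1))/(k+1)$ without explanation; your parity argument (that $n+2$ and $k+1$ are both even, forcing the residue $r$ to be even and hence at most $k-1$) is exactly the missing justification, and your verification of the final inequality is also omitted in the paper.
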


\begin{proof}
When $n$ is odd we have 
\begin{align*}
\ceiling{\frac{n}{2}}+\floor{\frac{n+2}{k+1}}-\frac{n}{k}=\frac{n+1}{2}+\floor{\frac{n+2}{k+1}}-\frac{n}{k}&\geq \frac{n+1}{2}+\frac{n+2-k}{k+1}-\frac{n}{k}\\
&=\frac{n}{2}+\frac{n}{k+1}-\frac{n}{k}+\frac{1}{2}-\frac{k-2}{k+1}
\end{align*}
and when $n$ is even we have 
\begin{align*}
\ceiling{\frac{n}{2}}+\floor{\frac{n+2}{k+1}}-\frac{n}{k}=\frac{n}{2}+\floor{\frac{n+2}{k+1}}-\frac{n}{k}&\geq \frac{n}{2}+\frac{n+2-(k-1)}{k+1}-\frac{n}{k}\\
&=\frac{n}{2}+\frac{n}{k+1}-\frac{n}{k}-\frac{k-3}{k+1}. \qedhere
\end{align*}
\end{proof}

The following two technical facts will be used throughout the proof.  

\begin{fact}\label{ff} Let $m$ and $k$ be positive integers and let $n=mk$ (note that if $k$ is even, then $n$ is even).
\begin{enumerate}[label=\emph{(\roman*)}]
\item \label{ff1} If $k$ is even, then $2\floor{\frac{n+2}{k+2}}-\frac{n}{k}\geq 2\left(\frac{n+2-k}{k+2}\right)-\frac{n}{k}=\frac{(m-2)(k-2)}{k+2}$.
\item \label{ff2} If $k$ is even, then $3\floor{\frac{n+2}{k+2}}-\frac{2n}{k}\geq 3\left(\frac{n+2-k}{k+2}\right)-\frac{2n}{k} =\frac{(m-3)(k-4)-6}{k+2}$.
\item \label{ff3} If $k$ is odd and $n$ is even, then $2\floor{\frac{n+2}{k+1}}-\frac{n}{k}\geq 2\left(\frac{n+2-(k-1)}{k+1}\right)-\frac{n}{k}= \frac{(m-2)(k-1)+4}{k+1}$.
\item \label{ff4} If $k$ is odd and $n$ is even, then $3\floor{\frac{n+2}{k+1}}-\frac{2n}{k}\geq 3\left(\frac{n+2-(k-1)}{k+1}\right)-\frac{2n}{k} =\frac{(m-3)(k-2)+3}{k+1}$.
\end{enumerate}
\end{fact}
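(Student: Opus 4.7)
The plan is to prove all four items uniformly by decomposing each into (a) a floor inequality of the form $\floor{\frac{n+2}{d}} \geq \frac{n+2-r}{d}$, and (b) an algebraic identity expressing the right-hand side in the stated closed form.  In items (i) and (ii) we have $(d,r) = (k+2,k)$; in items (iii) and (iv) we have $(d,r)=(k+1,k-1)$.

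For (a), observe that $\floor{\frac{n+2}{d}} \geq \frac{n+2-r}{d}$ is equivalent to $(n+2) \bmod d \leq r$.  Since the remainder of division by $d$ lies in $\{0,1,\dots,d-1\}$, and in each case $d-r-1=1$, the only remainder that must be excluded is $d-1$.  This is a short parity argument.  In (i)/(ii), if $(n+2) \bmod (k+2) = k+1$, then reducing modulo $k+2$ via $n = mk \equiv -2m \pmod{k+2}$ yields $2m \equiv 3 \pmod{k+2}$.  But $k+2$ is even (since $k$ is even), so $2m \bmod (k+2)$ is always even, contradicting the oddness of $3$.  In (iii)/(iv), if $(n+2) \bmod (k+1) = k$, then $n = mk \equiv -m \pmod{k+1}$ gives $m \equiv 3 \pmod{k+1}$.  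Since $k$ is odd and $n = mk$ is even we have $m$ even, and since $k+1$ is even, $m \bmod (k+1)$ is even, contradicting $m \equiv 3$.

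For (b), each identity is a direct calculation: clear denominators, substitute $n = mk$, and factor.  For example, in (i) one computes
\[
2k(n+2-k) - n(k+2) = k\bigl((m-2)k - 2(m-2)\bigr) = k(m-2)(k-2),
\]
so dividing through by $k(k+2)$ gives $(m-2)(k-2)/(k+2)$.  The identities (ii), (iii), (iv) follow the same pattern; in each, the cross term $nk$ cancels appropriately once $n=mk$ is plugged in, and the remaining polynomial in $m$ and $k$ factors in the stated way.

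The only nontrivial step is the parity observation in (a), which is also the reason why the hypothesis ``$n$ even'' appears in (iii) and (iv): without it, $m$ could be odd and the exceptional remainder $k$ modulo $k+1$ would actually be attainable (for instance, $k=3$, $m=3$, $n=9$).  Once these congruence obstructions are recorded, the remainder of the proof is routine algebraic bookkeeping.
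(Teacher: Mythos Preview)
Your proof is correct. The paper states this fact without proof, treating both the floor estimate and the closed-form simplification as routine; your write-up makes both steps explicit and is sound throughout, including the observation that the hypothesis ``$n$ even'' in (iii)/(iv) is precisely what rules out the bad residue (your example $k=m=3$, $n=9$ indeed violates the inequality).

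One remark on step~(a): the modular-arithmetic argument excluding the remainder $d-1$ works, but there is a shorter route that the hypotheses seem designed for. In every item both $n+2$ and the denominator $d$ are even (in (i)/(ii) because $k$ even forces $n$ even; in (iii)/(iv) because $k$ is odd and $n$ is assumed even). Writing $n+2=2a$ and $d=2b$, one has $\lfloor (n+2)/d\rfloor=\lfloor a/b\rfloor\geq (a-b+1)/b$, the universal bound valid for any integer $a$ and positive integer $b$; unwinding gives exactly $(n+2-(d-2))/d$, namely $(n+2-k)/(k+2)$ in (i)/(ii) and $(n+2-(k-1))/(k+1)$ in (iii)/(iv). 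This halving trick bypasses the congruence case analysis entirely, though of course your argument and this one are equivalent in content.
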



\begin{fact}\label{ineq} Let $m$ and $k$ be positive integers with $k\geq 3$ and let $n=mk$.
\begin{enumerate}[label=\emph{(\roman*)}]
\item \label{f1} If $k$ is even and $n\geq 3k$, then $2\left(\dk\right)>(1-\frac{1}{k})n$.
\item \label{f2} If $k$ is odd and $n\geq 2k$, then $2\left(\dk\right)>(1-\frac{1}{k})n$.
\item \label{f3} If $n\geq 2k$, then $3\left(\dk\right)\geq (2-\frac{1}{k})n-\floor{\frac{n-1}{2}}-2$.
\end{enumerate}
\end{fact}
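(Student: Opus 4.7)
The plan is to handle each part by expanding $\dk$ according to its piecewise definition, reducing the claim (after cancellation) to a lower bound on a floor term, and then applying the already-established Fact \ref{ff}. Throughout, let $m = n/k$, and recall that if $k$ is even then $k\mid n$ forces $n$ even, while if $k$ is odd and $n$ is odd then $m$ is also odd.

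For \ref{f1}, $k$ even gives $\dk = \frac{n}{2}+\floor{\frac{n+2}{k+2}}-\frac{n}{k}$, so $2\dk > (1-\frac{1}{k})n$ simplifies to $2\floor{\frac{n+2}{k+2}} > \frac{n}{k}$. By Fact \ref{ff}.\ref{ff1} the excess is at least $\frac{(m-2)(k-2)}{k+2}$, which is strictly positive since $k\geq 4$ and $m\geq 3$. For \ref{f2}, $k$ odd, I split on the parity of $n$. When $n$ is even, $\ceiling{\frac{n}{2}}=\frac{n}{2}$ and the inequality reduces to $2\floor{\frac{n+2}{k+1}} > \frac{n}{k}$; Fact \ref{ff}.\ref{ff3} provides a strict surplus of $\frac{(m-2)(k-1)+4}{k+1}>0$ for $m\geq 2$ and $k\geq 3$. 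When $n$ is odd, the extra $+1$ from $\ceiling{\frac{n}{2}}=\frac{n+1}{2}$ weakens the requirement to $2\floor{\frac{n+2}{k+1}}\geq \frac{n}{k}$, and since $m\geq 3$ (odd), the naive estimate $\floor{\frac{n+2}{k+1}}\geq \frac{n+2-k}{k+1}$ combined with $m(k-1)\geq 2(k-2)$ for $k\geq 3$ suffices.

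For \ref{f3}, I expand both sides, noting $\floor{\frac{n-1}{2}} = \frac{n}{2}-1$ when $n$ is even and $\frac{n-1}{2}$ when $n$ is odd. In the $k$-even case the inequality collapses to $3\floor{\frac{n+2}{k+2}} \geq \frac{2n}{k} - 1$, and Fact \ref{ff}.\ref{ff2} rearranges this to $(m-2)(k-4)\geq 0$, which holds for $m\geq 2,k\geq 4$ (with equality exactly at $k=4$ or $m=2$, explaining the weak inequality). The $k$-odd, $n$-even subcase similarly reduces to an estimate handled directly by Fact \ref{ff}.\ref{ff4}: the resulting bound $(m-3)(k-2)\geq -k-4$ is trivial for $m\geq 2,k\geq 3$. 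The $k$-odd, $n$-odd subcase (where $m\geq 3$ is forced by parity) gains an additional $\frac{3}{2}$ from $3\ceiling{\frac{n}{2}}$ minus $\floor{\frac{n-1}{2}}$, so after the same style of bound the requirement becomes $(m-3)(k-2)\geq -3(k+1)$, which is immediate.

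The main obstacle is purely bookkeeping: keeping parities of $n$ and $k$ straight, distinguishing strict versus weak inequalities (strict in \ref{f1} and \ref{f2}, weak in \ref{f3}), and identifying which Fact \ref{ff} bound to invoke in each subcase. The tightest borderline cases are $(m,k)=(3,4)$ for \ref{f1}, $(m,k)=(2,3)$ for \ref{f2}, and $(m,k)=(2,4)$ for \ref{f3} (where equality is actually achieved). Once the reductions are carried out, the proof is an entirely mechanical application of Fact \ref{ff} and the hypothesis $n\geq 2k$ (or $3k$ in \ref{f1}).
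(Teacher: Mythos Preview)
Your proposal is correct and follows essentially the same route as the paper: reduce each inequality to a lower bound on the floor term and then apply the appropriate part of Fact~\ref{ff}. The only organizational difference is that for $k$ odd the paper invokes Fact~\ref{kodd} to uniformly lower-bound $\dk$ by the $n$-even expression (thereby collapsing the two parity subcases into one), whereas you treat $n$ even and $n$ odd separately and verify the odd-$n$ subcase directly; since Fact~\ref{kodd} is itself just this parity split packaged as a lemma, the two arguments are equivalent in substance.
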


\begin{proof}
\begin{enumerate}
\item Since $n$ is divisible by $k$ and $k$ is even, $n$ is even.  Since $m=\frac{n}{k} \geq 3$ and $k \geq 3$, we have by Fact \ref{ff}.\ref{ff1}
\begin{align*}
2\left(\dk\right) - (1-\frac{1}{k})n = 2\floor{\frac{n+2}{k+2}} - \frac{n}{k} &\geq \frac{(m-2)(k-2)}{k+2}>0.
\end{align*}
 
\item Since $k$ is odd, we may assume by Fact \ref{kodd} that $n$ is even.  Since $m=\frac{n}{k}\geq 2$ and $k\geq 3$, we have by Fact \ref{ff}.\ref{ff3}
\begin{align*}
2\left(\dk\right) - (1-\frac{1}{k})n \geq 2\floor{\frac{n+2}{k+1}} - \frac{n}{k} &\geq \frac{(m-2)(k-1)+4}{k+1}>0.
\end{align*}

\item As before, if $k$ is even, then $n$ is even.  Also by Fact \ref{kodd}, if $k$ is odd, we may assume that $n$ is even.  So we have 
\begin{align*}
3\left(\dk\right) - (2 - \frac{1}{k})n + \floor{\frac{n-1}{2}} + 2 &\geq  3\floor{\frac{n+2}{2\ceiling{\frac{k+1}{2}}}}-\frac{2n}{k}+1.
\end{align*}
Now if $k$ is even, then we have $k\geq 4$ and thus by Fact \ref{ff}.\ref{ff2} and $m=\frac{n}{k}\geq 2$ we have
\begin{align*} 
3\floor{\frac{n+2}{k+2}}-\frac{2n}{k}+1\geq \frac{(m-3)(k-4)-6}{k+2}+1=\frac{(m-2)(k-4)}{k+2}\geq 0.
\end{align*}
If $k$ is odd, then since $m=\frac{n}{k}\geq 2$ and $k\geq 3$, we have by Fact \ref{ff}.\ref{ff4} that
\[
3\floor{\frac{n+2}{k+1}}-\frac{2n}{k}+1\geq \frac{(m-3)(k-2)+3}{k+1}+1=\frac{(m-2)(k-2)+6}{k+2}\geq 0.
\qedhere 
\]
\end{enumerate}
\end{proof}

\end{document}